\newtheorem{theorem}{Theorem}[section]
\newtheorem{remark}[theorem]{Remark}
\newtheorem{lemma}[theorem]{Lemma}
\newtheorem{definition}[theorem]{Definition}
\newenvironment{proof}[1][Proof]{\noindent\textbf{#1.} }{\ \rule{0.5em}{0.5em}}
\begin{document}

\title{Normalized solutions for the NLS equation with potential in higher
dimension: the purely Sobolev critical case}
\date{}
\author{Juntao Sun$^{a}$\thanks{%
E-mail address: jtsun@sdut.edu.cn(J. Sun)}, Shuai Yao$^{a}$\thanks{%
E-mail address: shyao@sdut.edu.cn (S. Yao)}, He Zhang$^{b}$\thanks{
E-mail address: hehzhang@163.com(H. Zhang)} \\
{\footnotesize $^{a}$\emph{School of Mathematics and Statistics, Shandong
University of Technology, Zibo 255049, PR China }}\\
{\footnotesize $^b$\emph{School of Mathematics and Statistics, Central South
University, Changsha 410083, PR China }}}
\maketitle

\begin{abstract}
We study normalized solutions for the nonlinear Schr\"{o}dinger (NLS) equation with potential and Sobolev critical nonlinearity. By establishing suitable assumptions on the potential, together with new
techniques, we find a mountain-pass type solution for $N\geq 6$, which solves an open problem presented
in a recent paper [Verzini and Yu, arXiv:2505.05357v1]. Moreover, we also find a local minimizer with negative energy for $N\geq 3$, which improves the results in [Verzini and Yu, arXiv:2505.05357v1].
\end{abstract}

\textbf{Keywords:} NLS equation; Normalized solutions; Sobolev critical
exponent; Constrained critical points.

\textbf{MSC(2020):} 35J20, 35J60, 35Q55, 35B33

\section{Introduction}

Consider the problem
\begin{equation}
\left\{
\begin{array}{ll}
-\Delta u+V(x)u=\lambda u+|u|^{2^{\ast }-2}u & \ \text{in}\ \mathbb{R}^{N},
\\
\int_{\mathbb{R}^{N}}u^{2}dx=a^{2}, &
\end{array}%
\right.  \label{e1}
\end{equation}%
where $N\geq 3,a>0,\lambda \in \mathbb{R}$ is unknown, $2^{\ast }:=\frac{2N}{%
N-2}$ is the Sobolev critical exponent, and $V$ is a potential.

From a variational point of view, solutions of problem (\ref{e1}) can be viewed as
critical points of the energy functional $I:H^{1}(\mathbb{R}^{N})\rightarrow
\mathbb{R}$ defined by
\begin{equation*}
I(u):=\frac{1}{2}\int_{\mathbb{R}^{N}}|\nabla u|^{2}dx+\frac{1}{2}\int_{%
\mathbb{R}^{N}}V(x)u^{2}dx-\frac{1}{2^{\ast }}\int_{\mathbb{R}%
^{N}}|u|^{2^{\ast }}dx
\end{equation*}%
on the constraint
\begin{equation}
\mathcal{M}_{a}:=\left\{ u\in H^{1}(\mathbb{R}^{N}):\int_{\mathbb{R}%
^{N}}u^{2}dx=a^{2}\right\} .  \label{e2}
\end{equation}%
Such solutions are usually called normalized solutions, since $\lambda \in
\mathbb{R}$ appears as a Lagrange multiplier and $L^{2}$-norms of solutions
are prescribed. This study often offers a good insight of the dynamical
properties of solutions for the associated evolutive equation, such as
stability or instability.

After the pioneering contribution by Lions \cite{L84} and Jeanjean \cite{J97}%
, the study of normalized solutions for NLS equation (or system) with
different types of potentials and nonlinearities has attracted much
attention in recent years. We refer to \cite{BM21,JL22,PV17,PVY24,S120,S20,SZ24,WW22,ZZ23} for the non-potential case and to
\cite{BMRV21,DZ22,IM20,MRV22,PPVV21,SZRW25,SYZ24,WS23,VY25,ZYS25} for the potential case.

For the potential case, finding normalized solutions seems to be more
difficult, since the techniques developed in the study of the non-potential
case, can not applied directly. In view of this, besides the importance in
the applications, not negligible reasons of many researchers' interest for
such problems are its stimulating and challenging difficulties. Let us
briefly comment some well-known results.

Case $(i):$ The mass-subcritical nonlinearity. Ikoma and Miyamoto \cite{IM20}
used the standard concentration compactness arguments to find a global
minimizer with negative energy when the potential $V(x)$ is negative and
vanishes on infinity, i.e. $\lim\limits_{|x|\rightarrow +\infty }V(x)=0.$
Later, when $\lim\limits_{|x|\rightarrow +\infty }V(x)=\sup_{x\in \mathbb{R}%
^{N}}V(x)\in (0,+\infty ]$, Zhong and Zou \cite{ZZ23} established the strict
sub-additive inequality, which can greatly simplify the discussion process
in the traditional sense.

Case $(ii):$ The mass-supcritical and Sobolev subcritical nonlinearity. Ding
and Zhong \cite{DZ22} found a local minimizer with positive energy when the
potential $V(x)\in C^{1}(\mathbb{R}^{N})$ satisfies $\lim\limits_{|x|%
\rightarrow +\infty }V(x)=\sup_{x\in \mathbb{R}^{N}}V(x)=0$ and some other
assumptions. If the potential $V(x)$ is positive and includes singularities,
then the mountain pass structure by Jeanjean \cite{J97} is destroyed. In
such a case, Bartsch et al. \cite{BMRV21} used a new variational principle
exploiting the Pohozaev identity and obtained a solution with high Morse
index and positive energy by constructing a suitable linking structure. If
the potential $V(x)$ is negative and satisfies $-V(x)\leq
\limsup_{|x|\rightarrow +\infty }-V(x)<+\infty ,$ Molle et al. \cite{MRV22}
proved the existence of two solutions, where one is a local minimizer with
negative energy and the other one is a mountain-pass type solution with
positive energy.

Case $(iii):$ The mass-supcritical and Sobolev critical nonlinearity. In a
recent and interesting paper, Verzini and Yu \cite{VY25} considered the
purely Sobolev critical case, i.e. problem (\ref{e1}). They allowed the
potential $V(x)$ to be irregular and weakly attractive. More precisely, $%
V(x) $ satisfies the following assumptions:
\begin{eqnarray}
V(x) :&=&V_{1}(x)+V_{2}(x)\text{ with}\ V_{1}\in L^{N/2}(\mathbb{R}^{N}),
\notag \\
W(x) :&=&V(x)|x|=W_{1}(x)+W_{2}(x)\text{ with }W_{1}\in L^{N}(\mathbb{R}%
^{N}),  \notag \\
V_{2},W_{2} &\in &L^{\infty }(\mathbb{R}^{N})\text{ and }%
|V_{2}(x)|+|W_{2}(x)|\rightarrow 0\ \text{as}\ |x|\rightarrow \infty ,
\label{e3}
\end{eqnarray}%
and
\begin{equation}
\widetilde{\lambda }_{1}:=\inf \left\{ \int_{\mathbb{R}^{N}}\left( |\nabla
\psi |^{2}+V(x)\psi ^{2}\right) dx:\psi \in \mathcal{M}_{1}\right\} <0,
\label{e4}
\end{equation}%
where $\mathcal{M}_{1}=\mathcal{M}_{a}$ with $a=1$ in (\ref{e2}). Under (%
\ref{e3})--(\ref{e4}) and some additional assumptions on $V$, they
concluded that problem (\ref{e1}) admits a local minimizer with negative
energy for all $N\geq 3,$ and a mountain-pass type solution with positive
energy for $3\leq N\leq 5$. The results can be regarded as paralleling those
in \cite{S20,JL22,WW22}, in which NLS equation with combined nonlinearities $(u)=|u|^{2^{\ast }-2}u+b|u|^{q-2}u$ ($2<q<2^{\ast }$ and $b>0$) was studied. The presence of the quadratic potential term $\int_{\mathbb{R}^{N}}V(x)u^{2}dx$, instead of the superquadratic perturbation $-b\int_{\mathbb{R}^{N}}|u|^{q}dx$, reflects on several different difficulties.

In \cite[Remark 1.11]{VY25}, it was pointed out that the existence of a
mountain-pass solution for $N\geq 6$ remains an interesting open problem.
Motivated by this fact, in this paper our first aim is to solve this open
problem, and to establish suitable assumptions on $V,$ together with the new
approach, such that a mountain-pass solution exists for $N\geq 6$. In
addition, our second aim is to restudy the existence of local minimizer by
weaking the assumptions on $V$ in \cite[Theorem 1.3]{VY25}.

Before stating our main results, we need the following definition and some
notations.

\begin{definition}
\label{T1.1}We say that a solution $u_{0}\in \mathcal{M}_{a}$ of problem (%
\ref{e1}) is a ground state if it possesses the minimal energy among all
solutions in $\mathcal{M}_{a}$, i.e. if
\begin{equation*}
I(u_{0})=\inf \left\{ I(u):u\in \mathcal{M}_{a},\text{ }(I|_{\mathcal{M}%
_{a}})^{\prime }(u)=0\right\} .
\end{equation*}
\end{definition}

For $N\geq 3$, we denote by $L^{p}(\mathbb{R}^{N})$ the Lebesgue space with
the norm $\Vert \cdot \Vert _{p}$ and by $H^{1}(\mathbb{R}^{N})$ the usual
Sobolev space with the norm $\Vert \cdot \Vert _{H^{1}}^{2}=\Vert \nabla
\cdot \Vert _{2}^{2}+\Vert \cdot \Vert _{2}^{2}$, while $D^{1,2}(\mathbb{R}%
^{N})$ stands for the homogenous Sobolev space with norm $\Vert \nabla \cdot
\Vert _{2}$. Denoting with $\mathcal{S}$ be the best Sobolev constant of the
embedding $D^{1,2}(\mathbb{R}^{N})\hookrightarrow L^{2^{\ast }}(\mathbb{R}%
^{N})$, we have%
\begin{equation*}
\mathcal{S=}\inf_{u\in H^{1}(\mathbb{R}^{N})\backslash \{0\}}\frac{\Vert
\nabla u\Vert _{2}^{2}}{\Vert u\Vert _{2^{\ast }}^{2}}=\min_{u\in D^{1,2}(%
\mathbb{R}^{N})\backslash \{0\}}\frac{\Vert \nabla u\Vert _{2}^{2}}{\Vert
u\Vert _{2^{\ast }}^{2}}.
\end{equation*}

Our main results are presented as follows.

\begin{theorem}
\label{T1.2} Assume that $N\geq 3$ and conditions (\ref{e3})--(\ref{e4})
hold. If
\begin{equation}
\Vert V_{1}^{-}\Vert _{N/2}<\mathcal{S}\ \text{and}\ \Vert V_{2}^{-}\Vert
_{\infty }<\frac{2}{N}\left( 1-\Vert V_{1}^{-}\Vert _{N/2}\mathcal{S}%
^{-1}\right) ^{\frac{N}{2}}a^{-2}\mathcal{S}^{\frac{N}{2}},  \label{e5}
\end{equation}%
then problem (\ref{e1}) has a solution $\bar{v}\in H^{1}(\mathbb{R}^{N})$,
which correspondis to a local minimizer of $I$ on $\mathcal{M}_{a}$ with
negative energy, negative multipliers, and that can be chosen to be strictly
positive almost everywhere in $\mathbb{R}^{N}$.\newline
If in addition,
\begin{equation}
\max \left\{ \Vert W_{1}\Vert _{N},a^{2}\Vert W_{2}\Vert _{\infty }\right\}
<C_{1},\ \text{if}\ N=3,4  \label{e6}
\end{equation}%
or
\begin{equation}
\max \left\{ \Vert V_{1}^{+}\Vert _{N/2},\Vert W_{1}\Vert _{N}\right\} <C_{2}%
\text{ and }\max \left\{ a^{2}\Vert V_{2}^{+}\Vert _{\infty },a\Vert
W_{2}\Vert _{\infty }\right\} <C_{3},\ \text{if}\ N\geq 5,  \label{e7}
\end{equation}%
for appropriate constants $C_{1},C_{2},C_{3}>0$, then the local minimizer $%
\bar{v}$ is a ground state.
\end{theorem}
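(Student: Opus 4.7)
Condition \eqref{e5} is tailored so that $I$ has a mountain in the gradient-norm variable. For $u\in\mathcal{M}_a$, Sobolev's inequality applied to $\int V_1^- u^2\le \|V_1^-\|_{N/2}\mathcal{S}^{-1}\|\nabla u\|_2^2$ together with $\int V_2^- u^2\le \|V_2^-\|_\infty a^2$ yields
\begin{equation*}
I(u)\;\geq\; h(\|\nabla u\|_2),\qquad h(t):=\tfrac12 C_1 t^2-\tfrac{a^2}{2}\|V_2^-\|_\infty-\tfrac{1}{2^*}\mathcal{S}^{-2^*/2}t^{2^*},
\end{equation*}
with $C_1:=1-\|V_1^-\|_{N/2}\mathcal{S}^{-1}>0$ by the first half of \eqref{e5}. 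The function $h$ has its unique positive maximum at $R_0=(C_1\mathcal{S}^{N/(N-2)})^{(N-2)/4}$, and the second half of \eqref{e5} is exactly $h(R_0)>0$. Set $A_{R_0}:=\{u\in\mathcal{M}_a:\|\nabla u\|_2<R_0\}$ and $m_a:=\inf_{A_{R_0}} I$. Using an almost-minimizer $\psi\in\mathcal{M}_1$ of \eqref{e4}, setting $\psi_a:=a\psi$, and testing the dilations $\psi_{a,t}(x):=t^{N/2}\psi_a(tx)$ (which preserve the $L^2$-norm and drive $\|\nabla\cdot\|_2$ and $\int V\cdot^2$ to $0$ as $t\to 0^+$), a Taylor expansion gives $I(\psi_{a,t})=\tfrac{t^2}{2}\int(|\nabla\psi_a|^2+V\psi_a^2)+o(t^2)<0$ for small $t$, hence $m_a<0$.

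Any minimizing sequence $\{u_n\}\subset A_{R_0}$ stays bounded away from $\|\nabla u\|_2=R_0$ thanks to $h(R_0)>0>m_a$, so Ekeland's variational principle furnishes a Palais--Smale sequence at level $m_a$, with Lagrange multipliers $\lambda_n$ which are bounded by a routine argument. Up to a subsequence $u_n\rightharpoonup \bar v$ in $H^1$ and $\lambda_n\to\bar\lambda$. The delicate step is strong convergence. Setting $w_n:=u_n-\bar v$, the Brezis--Lieb lemma gives $\|u_n\|_{2^*}^{2^*}=\|\bar v\|_{2^*}^{2^*}+\|w_n\|_{2^*}^{2^*}+o(1)$, while \eqref{e3} ensures $u\mapsto \int Vu^2$ is weakly continuous on $H^1$, so $\int Vu_n^2\to\int V\bar v^2$. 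The Palais--Smale identity forces $\|\nabla w_n\|_2^2-\|w_n\|_{2^*}^{2^*}\to 0$, so by Sobolev $\ell:=\lim\|\nabla w_n\|_2^2\in\{0\}\cup[\mathcal{S}^{N/2},\infty)$. The energy splits as $m_a=I(\bar v)+\tfrac{1}{N}\ell+o(1)$. Since $\|\bar v\|_2\le a$, the same estimate that produced $h$ gives $I(\bar v)\ge h(\|\nabla\bar v\|_2)\ge -\tfrac{a^2}{2}\|V_2^-\|_\infty$, and since \eqref{e5} forces $\tfrac{1}{N}\mathcal{S}^{N/2}>\tfrac{a^2}{2}\|V_2^-\|_\infty$, this excludes $\ell\ge \mathcal{S}^{N/2}$; hence $\ell=0$, $u_n\to\bar v$ in $H^1$, and $\bar v\in\mathcal{M}_a$ achieves $m_a$.

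Combining the Lagrange identity $\bar\lambda a^2=\|\nabla\bar v\|_2^2+\int V\bar v^2-\|\bar v\|_{2^*}^{2^*}$ with $I(\bar v)=m_a<0$ gives $\bar\lambda a^2=2m_a-\tfrac{2}{N}\|\bar v\|_{2^*}^{2^*}<0$, proving negativity of the multiplier. Positivity a.e.\ follows by replacing $\bar v$ by $|\bar v|$ (which leaves $I$ invariant and preserves the constraint) and applying the strong maximum principle to $-\Delta|\bar v|+(V-\bar\lambda)|\bar v|=|\bar v|^{2^*-1}$ on $\mathbb{R}^N$.

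\textbf{Part 2 (Ground state).}
Given an arbitrary solution $u_*\in\mathcal{M}_a$, combine its Lagrange identity with the Pohozaev identity for \eqref{e1} (justified distributionally using \eqref{e3} and a standard truncation/density argument) to obtain, after eliminating $\lambda_*$,
\begin{equation*}
(N-1)\|\nabla u_*\|_2^2-\|u_*\|_{2^*}^{2^*}=\tfrac{1}{2}\int(x\cdot\nabla V)u_*^2,
\end{equation*}
and integrate the right-hand side by parts to convert $x\cdot\nabla V$ into expressions in $V$ and $W=V|x|$ alone. Estimating the resulting integrals by H\"older, Hardy, and Sobolev inequalities in terms of $\|V_1^\pm\|_{N/2}$, $\|V_2^\pm\|_\infty$, $\|W_1\|_N$, $\|W_2\|_\infty$, and invoking the smallness encoded in \eqref{e6} for $N=3,4$ or in \eqref{e7} for $N\ge 5$, yields a \emph{gap} phenomenon: every solution $u_*\in\mathcal{M}_a$ satisfies either $\|\nabla u_*\|_2\le R_0$, in which case $u_*$ is admissible for $m_a$ and so $I(u_*)\ge I(\bar v)$, or $\|\nabla u_*\|_2\ge R_2$ for some $R_2>R_0$, in which case the identity $I(u_*)=\tfrac{\lambda_* a^2}{2}+\tfrac{1}{N}\|u_*\|_{2^*}^{2^*}$ combined with the perturbed Sobolev lower bound $\|u_*\|_{2^*}^{2^*}\ge(1-\varepsilon)\mathcal{S}^{N/2}$ gives $I(u_*)>0>I(\bar v)$. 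The main obstacle here is making the Pohozaev manipulation rigorous for $V$ of low regularity and, more substantially, extracting Hardy--Sobolev estimates quantitatively sharp enough to open the gap $(R_0,R_2)$; the dimensional split between \eqref{e6} and \eqref{e7} reflects that for $N=3,4$ the weighted potential $W$ alone suffices to control the relevant terms, whereas for $N\ge 5$ additional control on the positive parts of $V$ itself is required to close the estimate.
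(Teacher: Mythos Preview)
Your Part 1 strategy matches the paper's (Lemmas \ref{T2.4}--\ref{T2.5}): bound $I$ below by a one-variable function of $\|\nabla u\|_2$ whose positive maximum under \eqref{e5} creates a well, then show the infimum in the well is negative and attained. There is, however, a genuine gap in your compactness step. The claim ``the Palais--Smale identity forces $\|\nabla w_n\|_2^2-\|w_n\|_{2^*}^{2^*}\to0$'' is not correct: testing the constrained PS relation against $w_n$ gives
\[
\|\nabla w_n\|_2^2-\|w_n\|_{2^*}^{2^*}-\bar\lambda\,\|w_n\|_2^2\to0,
\]
and $\|w_n\|_2^2\to a^2-\|\bar v\|_2^2$ need not vanish. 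With this extra term the dichotomy $\ell\in\{0\}\cup[\mathcal S^{N/2},\infty)$ and the energy splitting $m_a=I(\bar v)+\tfrac1N\ell$ both break down. The paper handles this by invoking the trichotomy of Lemma \ref{T2.3}: one first rules out $\bar\lambda\ge0$ (which forces $\bar v\equiv0$ and then $m_a\ge0$), and only under $\bar\lambda<0$ is the clean energy gap $\tfrac1N\mathcal S^{N/2}\mu^{1-N/2}$ recovered. Your bound $I(\bar v)\ge-\tfrac{a^2}{2}\|V_2^-\|_\infty$ is exactly the right ingredient once that is in place.

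For Part 2 your argument departs from the paper and does not close as written. First, the Nehari--Pohozaev combination gives coefficient $1$, not $N-1$: one obtains $\|\nabla u_*\|_2^2-\|u_*\|_{2^*}^{2^*}=-\tfrac12\int V\,(Nu_*^2+2u_*\nabla u_*\cdot x)\,dx$. More seriously, in your ``large gradient'' alternative, the formula $I(u_*)=\tfrac{\lambda_* a^2}{2}+\tfrac1N\|u_*\|_{2^*}^{2^*}$ together with $\|u_*\|_{2^*}^{2^*}\ge(1-\varepsilon)\mathcal S^{N/2}$ cannot yield $I(u_*)>0$ without an a priori lower bound on $\lambda_*$, which you never establish and which can in fact be as negative as a small multiple of $\|\nabla u_*\|_2^2$. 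The paper's route (Lemma \ref{T2.6}) is both simpler and avoids this: assume $J_\mu(v)<0$ from the outset and combine this with Pohozaev to obtain \eqref{e18}--\eqref{e19}; under the smallness \eqref{e6}--\eqref{e7} these force $\|\nabla v\|_2<\bar t_\mu$, hence $J_\mu(v)\ge\alpha_\mu$. Since solutions with $J_\mu\ge0$ trivially satisfy $J_\mu\ge\alpha_\mu$, this already proves the ground-state property without any gap dichotomy. The dimensional split you anticipate is correct and arises here: for $N=3,4$ the factor $\tfrac{N-4}{2}$ in \eqref{e18} is nonpositive, so only $W$ needs to be small; for $N\ge5$ one must use \eqref{e19}, where $V^+$ enters with a positive coefficient.
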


\begin{remark}
\label{T1.3} In Theorem \ref{T1.2}, we likewise find a local minimizer in a
small sphere of $\mathcal{M}_{a}$ similar to that in \cite[Theorem 1.3]{VY25}.
However, our main innovation lies in allowing the radius of the sphere to
be controlled by $V$ rather than being a fixed constant, which can help us
weak the assumptions on $V$ in \cite[(1.8)]{VY25}. Furthermore, when we
prove that the local minimizer is a ground state, there is no need to impose
separate constraints on $V^{+}$ for $N=3,4,$ i.e. (\ref{e6}), which can
weak the assumptions in \cite[(1.9) and (1.10)]{VY25}.
\end{remark}

Next, we study the existence of mountain-pass solution for problem (\ref%
{e1}) with $N\geq 6$. To find a mountain-pass solution, we need to
establish the following assumption on $V:$%
\begin{equation}
\inf\limits_{x\in B_{2R_{\ast }}}V_{1}^{-}>\Vert V_{2}^{-}\Vert _{\infty
}\quad \text{for some }R_{\ast }>0. \label{e8}
\end{equation}%
Condition (\ref{e8}) is crucial for the energy estimation of mountain-pass
level.

\begin{theorem}
\label{T1.4} Let $N\geq 6.$ Assume that all assumptions of Theorem \ref{T1.2}
and (\ref{e8}) are satisfied. Then problem (\ref{e1}) has a
mountain-pass solution $\bar{v}_{\ast }\in H^{1}(\mathbb{R}^{N})$ at
positive energy level, with negative Lagrange multiplier, which can be
chosen to be strictly positive almost everywhere in $\mathbb{R}^{N}$.
\end{theorem}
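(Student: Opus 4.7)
I aim to realize $\bar v_\ast$ as a mountain-pass critical point of $I|_{\mathcal{M}_a}$ based at the strict local minimizer $\bar v$ provided by Theorem \ref{T1.2}. Using the $L^2$-preserving rescaling $s\star u(x):=e^{Ns/2}u(e^{s}x)$, one has
\[I(s\star u)=\frac{e^{2s}}{2}\|\nabla u\|_2^2+\frac12\int V(e^{-s}y)u^2\,dy-\frac{e^{2^\ast s}}{2^\ast}\|u\|_{2^\ast}^{2^\ast}\longrightarrow-\infty\quad\text{as }s\to+\infty,\]
so $I(s_0\star\bar v)<I(\bar v)<0$ for $s_0\gg 1$. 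Setting
\[c^\ast:=\inf_{\gamma\in\Gamma}\max_{t\in[0,1]}I(\gamma(t)),\qquad \Gamma:=\{\gamma\in C([0,1],\mathcal{M}_a):\gamma(0)=\bar v,\ \gamma(1)=s_0\star\bar v\},\]
one then obtains genuine mountain-pass geometry with $c^\ast>\max\{I(\bar v),I(s_0\star\bar v)\}$.

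\textbf{Bounded Palais-Smale sequence.} Following the Jeanjean trick, I would apply a minimax principle to the augmented functional $\widetilde I(s,u):=I(s\star u)$ on $\mathbb{R}\times\mathcal{M}_a$ and extract a sequence $(u_n)\subset\mathcal{M}_a$ with $I(u_n)\to c^\ast$, $(I|_{\mathcal{M}_a})'(u_n)\to 0$ and the asymptotic Pohozaev identity
\[P(u_n):=\|\nabla u_n\|_2^2-\|u_n\|_{2^\ast}^{2^\ast}-\tfrac12\int\langle\nabla V,x\rangle u_n^2\,dx\longrightarrow 0.\]
Combining $I(u_n)\to c^\ast$ and $P(u_n)\to 0$ with the smallness of $V,W$ from (\ref{e3})-(\ref{e5}) (so that the potential integrals are controlled by $\|\nabla u_n\|_2^2$ times small constants) yields a uniform $H^1$-bound on $(u_n)$. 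Extracting Lagrange multipliers $\lambda_n$ and passing to a weak limit $u_n\rightharpoonup\bar v_\ast$ in $H^1(\mathbb{R}^N)$, a standard algebraic manipulation of Pohozaev and the equation together with the smallness hypotheses gives $\lambda_n\to\lambda_\ast<0$.

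\textbf{Key energy estimate (main obstacle).} The crux of the argument, and the step where \cite{VY25} breaks down for $N\geq 6$, is the strict inequality $c^\ast<I(\bar v)+\frac{1}{N}\mathcal{S}^{N/2}$; the new assumption (\ref{e8}) is designed precisely to make this go through. I plan to use test paths emanating from $\bar v$ built with the Aubin-Talenti bubble $U_{\varepsilon,x_0}$ centered at some $x_0\in B_{R_\ast}$, cut off by a smooth function $\eta$ supported in $B_{2R_\ast}$, and renormalized onto $\mathcal{M}_a$, with the $s\star$-rescaling pulling the endpoint to $s_0\star\bar v$. Classical Brezis-Nirenberg estimates give
\[\|\nabla(\eta U_{\varepsilon,x_0})\|_2^2=\mathcal{S}^{N/2}+O(\varepsilon^{N-2}),\qquad \|\eta U_{\varepsilon,x_0}\|_{2^\ast}^{2^\ast}=\mathcal{S}^{N/2}+O(\varepsilon^N),\]
while for $N\geq 5$ one has $\|\eta U_{\varepsilon,x_0}\|_2^2=C_N\varepsilon^2+o(\varepsilon^2)$. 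Assumption (\ref{e8}) ensures $V(x)\leq-\delta$ on $B_{2R_\ast}$ for some $\delta>0$, whence
\[\int V(\eta U_{\varepsilon,x_0})^2\,dx\leq -c_0\,\varepsilon^2+o(\varepsilon^2),\qquad c_0>0.\]
For $N\geq 6$ the remainder $O(\varepsilon^{N-2})$ is $o(\varepsilon^2)$, so the negative potential contribution dominates the Brezis-Nirenberg positive error and strictly lowers the maximum along the path below $I(\bar v)+\frac{1}{N}\mathcal{S}^{N/2}$. Controlling the interaction terms between $\bar v$ and the concentrated bubble (which decay faster than $\varepsilon^2$ as $\varepsilon\to 0$, since $\bar v$ is bounded and smooth on compact sets) is the most technically delicate ingredient.

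\textbf{Compactness and conclusion.} With $c^\ast$ strictly below the threshold, a Brezis-Nirenberg style concentration-compactness argument rules out the emergence of a Sobolev bubble on $(u_n)$, giving $u_n\to\bar v_\ast$ strongly in $D^{1,2}(\mathbb{R}^N)$. Combined with $\lambda_\ast<0$ and the decay of $V$ (which, via a comparison argument applied to $-\Delta u_n+(V-\lambda_n)u_n=|u_n|^{2^\ast-2}u_n+o(1)$, forces uniform decay of $u_n$ at infinity), one upgrades to strong $L^2$-convergence, so $\bar v_\ast\in\mathcal{M}_a$ with $I(\bar v_\ast)=c^\ast>0$. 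Strict positivity a.e.\ is obtained by restricting the minimax class to nonnegative paths (using $I(|u|)\leq I(u)$) and applying the strong maximum principle to the limiting equation with $\lambda_\ast<0$.
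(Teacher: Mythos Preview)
Your overall strategy matches the paper's: mountain-pass geometry based at the local minimizer $\bar v$, a bounded Palais--Smale sequence carrying an approximate Pohozaev identity (the paper invokes Ghoussoub's min-max principle, Lemma~\ref{T3.4}, in place of the Jeanjean augmented functional, but the effect is the same), the key threshold estimate $c^\ast<\alpha_\mu+\frac{1}{N}\mathcal{S}^{N/2}\mu^{1-N/2}$ via truncated Aubin--Talenti bubbles, and compactness through the trichotomy of Lemma~\ref{T2.3}. In particular, the negativity $\lambda_\ast<0$ in the paper is obtained \emph{a posteriori} from Lemma~\ref{T2.3}: the strict energy bound rules out alternatives (ii) and (iii), forcing (i), rather than by an a priori algebraic manipulation as you suggest.

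There is, however, a genuine gap in your energy-estimate sketch, exactly at $N=6$. The interaction integrals between $\bar v$ and $U_\varepsilon$ --- e.g.\ $\int\bar v\,U_\varepsilon\,dx$, $\int\bar v^{2^\ast-1}U_\varepsilon\,dx$, $\int\bar v\,U_\varepsilon^{2^\ast-1}\,dx$ --- are of order $\varepsilon^{(N-2)/2}$ (see Lemma~\ref{T3.1}), which for $N=6$ is $\varepsilon^2$, the \emph{same} order as your leading negative term $-c_0\varepsilon^2$; they do \emph{not} ``decay faster than $\varepsilon^2$'' as you claim. The paper's Lemma~\ref{T3.3} therefore cannot simply discard them: it (i) uses the Euler--Lagrange equation for $\bar v$, see \eqref{e37}, to convert the cross gradient and potential terms into $\mu\int\bar v^{2^\ast-1}U_\varepsilon\,dx$; (ii) keeps the favorable negative cross term $-t^{2^\ast-1}\sigma\int\bar v\,U_\varepsilon^{2^\ast-1}\,dx$ coming from the expansion of $|\sigma\bar v+tU_\varepsilon|^{2^\ast}$, see \eqref{e29}; and (iii) takes $R_\ast>0$ small so that the net coefficient $c_5R_\ast^2-c_6$ of $\varepsilon^{(N-2)/2}$ in \eqref{e39} is negative. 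Relatedly, condition~\eqref{e8} does not assert $V\le-\delta$ on $B_{2R_\ast}$ (nothing bounds $V_2^+$ there); its precise form $\inf_{B_{2R_\ast}}V_1^->\|V_2^-\|_\infty$ is tailored to cancel the \emph{positive} term $\tfrac{t^2}{2}\|V_2^-\|_\infty\|U_\varepsilon\|_2^2$ that appears when estimating $J_\mu(\sigma\bar v)$ in \eqref{e36}, not merely to make $\int V\,U_\varepsilon^2\,dx$ negative. Without these refinements your argument does not close for $N=6$.
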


\begin{remark}
\label{T1.5}$(i)$ According to the assumptions on $V$ in Theorems \ref{T1.2}
and \ref{T1.4}, we find that $V^{-}\neq 0$ needs to be sufficiently large.
Moreover, the case of $V^{+}\neq 0$ may exist.\newline
$(ii)$ We easily find some examples of $V$ satisfying all assumptions in
Theorems \ref{T1.2} and \ref{T1.4}. For example, let
\begin{equation*}
V(x)=-\frac{C}{1+|x|^{\tau }},
\end{equation*}
where ${\tau }>2$ and $C>0$. By calculating, we deduce that
\begin{eqnarray}
\Vert V\Vert _{N/2} &=&C(Nw_{N})^{\frac{2}{N}}\left(\int_{0}^{+\infty }\frac{%
s^{N-1}}{(1+s^{\tau })^{\frac{N}{2}}}ds\right)^{\frac{2}{N}}<+\infty ,  \notag \\
\Vert W\Vert _{N} &=&\left( \int_{\mathbb{R}^{N}}|V(x)|^{\frac{N}{2}}|V(x)|^{%
\frac{N}{2}}|x|^{N}dx\right) ^{\frac{1}{N}}  \notag \\
&=&\left( \int_{\mathbb{R}^{N}}|V(x)|^{\frac{N}{2}}\frac{C^{\frac{N}{2}%
}|x|^{N}}{\left( 1+|x|^{\tau }\right) ^{\frac{N}{2}}}dx\right) ^{\frac{1}{N}}
\notag \\
&\leq &C^{\frac{1}{2}}\Vert V\Vert _{N/2}^{\frac{1}{2}}<+\infty ,
\label{e9}
\end{eqnarray}%
where $w_{N}$ is the volume of the $N$-dimensional unit sphere. Let
\begin{eqnarray}
&&V_{1}(x)=V(x)\cdot \chi _{B_{R_{1}}},\ V_{2}(x)=V(x)\cdot \chi _{\mathbb{R}%
^{N}\setminus B_{R_{1}}},  \notag \\
&&W_{1}(x)=W(x)\cdot \chi _{B_{R_{2}}},\ W_{2}(x)=W(x)\cdot \chi _{\mathbb{R}%
^{N}\setminus B_{R_{2}}},  \label{e10}
\end{eqnarray}%
for some $R_{1},R_{2}>0$. By (\ref{e9})--(\ref{e10}) and the definition
of $V$, we easily verify that conditions (\ref{e3})--(\ref{e4}) and (\ref%
{e8}) hold. Moreover, by controlling $C>0$ sufficiently small, $%
R_{1},R_{2}>0$ sufficiently large, or the mass $a>0$ sufficiently small,
conditions (\ref{e5})--(\ref{e7}) hold.
\end{remark}

This paper is organized as follows. In Section 2, we present some
preliminary results and prove Theorem \ref{T1.2}. Section 3 is devoted to
proving Theorem \ref{T1.4}.

\section{The existence of local minimizer}

In this section, we firstly give some preliminary results. For convenience
of calculation we apply the transformation%
\begin{equation*}
v=\frac{1}{a}u\text{ and }\mu =a^{\frac{4}{N-2}}>0,
\end{equation*}%
to convert problem (\ref{e1}) into the following one, which also
incorporates the sign condition:
\begin{equation}
\left\{
\begin{array}{ll}
-\Delta v+V(x)v=\lambda v+\mu |v|^{2^{\ast }-2}v & \ \text{in}\ \mathbb{R}%
^{N}, \\
v\geq 0,\ \int_{\mathbb{R}^{N}}v^{2}dx=1. &
\end{array}%
\right.  \label{e11}
\end{equation}%
Then the solutions of problem (\ref{e11}) correspond to critical points of
the energy functional
\begin{equation*}
J_{\mu }(v)=\frac{1}{2}\int_{\mathbb{R}^{N}}|\nabla v|^{2}dx+\frac{1}{2}%
\int_{\mathbb{R}^{N}}V(x)v^{2}dx-\frac{\mu }{2^{\ast }}\int_{\mathbb{R}%
^{N}}|v|^{2^{\ast }}dx
\end{equation*}%
constrained to the $L^{2}$-norm
\begin{equation*}
\mathcal{M}:=\mathcal{M}_{1}=\left\{ v\in H^{1}(\mathbb{R}^{N}):\int_{%
\mathbb{R}^{N}}v^{2}dx=1\right\} .
\end{equation*}

\begin{lemma}
\label{T2.1}(\cite[Lemma 2.2]{MRV22}) For every $v\in H^{1}(\mathbb{R}^{N})$%
, we have,
\begin{eqnarray*}
&&\left\vert \int_{\mathbb{R}^{N}}V_{1}(x)v^{2}dx\right\vert \leq \Vert
V_{1}\Vert _{N/2}\Vert v\Vert _{2^{\ast }}^{2}\leq \mathcal{S}^{-1}\Vert
V_{1}\Vert _{N/2}\Vert \nabla v\Vert _{2}^{2}, \\
&&\left\vert \int_{\mathbb{R}^{N}}V_{1}(x)v(x)\nabla v(x)\cdot
xdx\right\vert \leq \Vert W_{1}\Vert _{N}\Vert v\Vert _{2^{\ast }}\Vert
\nabla v\Vert _{2}\leq \mathcal{S}^{-1/2}\Vert W_{1}\Vert _{N}\Vert \nabla
v\Vert _{2}^{2}, \\
&&\left\vert \int_{\mathbb{R}^{N}}V_{2}(x)v^{2}dx\right\vert \leq \Vert
V_{2}\Vert _{\infty }\Vert v\Vert _{2}^{2}, \\
&&\left\vert \int_{\mathbb{R}^{N}}V_{2}(x)v(x)\nabla v(x)\cdot
xdx\right\vert \leq \Vert W_{2}\Vert _{\infty }\Vert v\Vert _{2}\Vert \nabla
v\Vert _{2}.
\end{eqnarray*}
\end{lemma}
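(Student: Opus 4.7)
The plan is to obtain all four bounds by direct application of H\"older's inequality, combined where needed with the Sobolev embedding $D^{1,2}(\mathbb{R}^{N})\hookrightarrow L^{2^{\ast}}(\mathbb{R}^{N})$ encoded in the constant $\mathcal{S}$. Throughout, I would use the natural compatibility convention implicit in (\ref{e3}), namely that $W_{j}(x)=V_{j}(x)|x|$ for $j=1,2$, so that the $L^{N}$ and $L^{\infty}$ norms of $V_{j}(x)|x|$ that appear in the integrals may be identified with $\|W_{j}\|_{N}$ and $\|W_{j}\|_{\infty}$ respectively.

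For the first line, I would invoke H\"older with conjugate exponents $N/2$ and $N/(N-2)$. Since $2\cdot N/(N-2)=2^{\ast}$, one has $\|v^{2}\|_{N/(N-2)}=\|v\|_{2^{\ast}}^{2}$, which gives the first inequality, and the definition of $\mathcal{S}$ yields the second via $\mathcal{S}\|v\|_{2^{\ast}}^{2}\le\|\nabla v\|_{2}^{2}$. For the second line, I would bound $|\nabla v(x)\cdot x|\le|x|\,|\nabla v(x)|$ via Cauchy--Schwarz and then apply the triple H\"older with exponents $(N,2^{\ast},2)$, whose reciprocals satisfy $\tfrac{1}{N}+\tfrac{N-2}{2N}+\tfrac{1}{2}=1$; this produces $\|V_{1}(x)|x|\|_{N}\|v\|_{2^{\ast}}\|\nabla v\|_{2}=\|W_{1}\|_{N}\|v\|_{2^{\ast}}\|\nabla v\|_{2}$, and a further Sobolev step delivers the stated upper bound in terms of $\|\nabla v\|_{2}^{2}$.

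The remaining two estimates use only the essential boundedness of $V_{2}$ and $W_{2}$. The third is immediate after pulling $\|V_{2}\|_{\infty}$ out of the integral. For the fourth, I would again use $|\nabla v\cdot x|\le|x||\nabla v|$ together with $\|V_{2}(x)|x|\|_{\infty}=\|W_{2}\|_{\infty}$, and finish with Cauchy--Schwarz in $L^{2}$ applied to $|v||\nabla v|$. Since the lemma is a routine tool quoted from \cite{MRV22}, the main thing requiring care is simply bookkeeping the H\"older exponents and making explicit the compatibility of the two decompositions of $V$ and $W$; I do not foresee a genuine obstacle.
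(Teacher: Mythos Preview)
Your argument is correct and is precisely the standard proof: the paper does not give its own argument here but simply cites \cite[Lemma 2.2]{MRV22}, and what that citation unpacks to is exactly the H\"older/Sobolev computation you outline. Your explicit remark that the bounds in the second and fourth lines require the compatibility $W_{j}(x)=V_{j}(x)|x|$ is well placed, since without this identification the inequalities (with $V_{i}$ on the left and $W_{i}$ on the right) would not follow; the paper tacitly relies on this convention.
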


\begin{lemma}
\label{T2.2}(\cite[Lemma 3.1]{VY25}) Let $v\in \mathcal{M}$ be fixed. Then
we have\newline
$(i)$ $\Vert \nabla v_{t}\Vert _{2}\rightarrow 0$ and $J_{\mu }\left(
v_{t}\right) \rightarrow 0$ as $t\rightarrow 0$, where $v_{t}=t^{N/2}v(tx)$
for $t>0;$\newline
$(ii)$ $\Vert \nabla v_{t}\Vert _{2}\rightarrow +\infty $ and $J_{\mu
}\left( v_{t}\right) \rightarrow -\infty $ as $t\rightarrow +\infty .$
\end{lemma}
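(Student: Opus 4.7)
The idea is to compute how each term of $J_\mu$ scales under the $L^2$-preserving dilation $v_t(x)=t^{N/2}v(tx)$ and then pass to the limit. A direct change of variables gives
\[
\|v_t\|_2=\|v\|_2=1,\qquad \|\nabla v_t\|_2^{2}=t^{2}\|\nabla v\|_2^{2},\qquad \|v_t\|_{2^{\ast }}^{2^{\ast }}=t^{2^{\ast }}\|v\|_{2^{\ast }}^{2^{\ast }},
\]
so the kinetic and critical nonlinear terms are explicit monomials in $t$. The only nontrivial ingredient is the potential term, which I would handle by splitting $V=V_1+V_2$ as in (\ref{e3}) and controlling each piece separately via Lemma~\ref{T2.1} and the decay of $V_2$ at infinity.

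\textbf{Part (i) ($t\to 0$).} The kinetic and critical parts give $\tfrac12 t^{2}\|\nabla v\|_2^{2}\to 0$ and $\tfrac{\mu}{2^{\ast }}t^{2^{\ast }}\|v\|_{2^{\ast }}^{2^{\ast }}\to 0$ since $2,\,2^{\ast }>0$. For the $V_1$-part, Lemma~\ref{T2.1} applied to $v_t$ yields
\[
\Bigl|\int_{\mathbb{R}^{N}}V_1(x)v_t^{2}\,dx\Bigr|\le \mathcal{S}^{-1}\|V_1\|_{N/2}\,t^{2}\|\nabla v\|_2^{2}\longrightarrow 0.
\]
For $V_2$, the change of variables $y=tx$ rewrites the integral as $\int_{\mathbb{R}^{N}}V_2(y/t)\,v(y)^{2}\,dy$; because $V_2\in L^{\infty }$ and $V_2(x)\to 0$ as $|x|\to\infty$, dominated convergence (with majorant $\|V_2\|_{\infty }v^{2}\in L^{1}$) sends this to $0$. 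Summing the pieces gives $J_\mu(v_t)\to 0$ and $\|\nabla v_t\|_2\to 0$.

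\textbf{Part (ii) ($t\to+\infty$).} Clearly $\|\nabla v_t\|_2^{2}=t^{2}\|\nabla v\|_2^{2}\to+\infty$. For the energy, Lemma~\ref{T2.1} again controls the $V_1$-term by $\mathcal{S}^{-1}\|V_1\|_{N/2}t^{2}\|\nabla v\|_2^{2}$, while $|\int V_2 v_t^{2}|\le \|V_2\|_{\infty }$ is uniformly bounded in $t$. Hence
\[
J_\mu(v_t)\le \tfrac12\bigl(1+\mathcal{S}^{-1}\|V_1\|_{N/2}\bigr)t^{2}\|\nabla v\|_2^{2}+\tfrac12\|V_2\|_{\infty }-\tfrac{\mu}{2^{\ast }}t^{2^{\ast }}\|v\|_{2^{\ast }}^{2^{\ast }},
\]
and since $2^{\ast }>2$ the negative critical term dominates, forcing $J_\mu(v_t)\to -\infty$.

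\textbf{Main obstacle.} Everything reduces to routine scaling identities once one notices that the $V_2$-piece does not vanish from a crude $L^{\infty }$ bound as $t\to 0$; the key point is to exploit the decay of $V_2$ at infinity through the substitution $y=tx$ (which sends $y/t\to\infty$) together with dominated convergence. This is the only place where the structural hypothesis (\ref{e3}) is genuinely used.
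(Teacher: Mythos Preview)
Your argument is correct. The paper does not supply its own proof of this lemma; it simply quotes the result from \cite[Lemma~3.1]{VY25}. Your derivation---computing the exact scaling of the kinetic and critical terms, bounding the $V_1$-contribution via Lemma~\ref{T2.1}, and handling the $V_2$-contribution as $t\to 0$ by the substitution $y=tx$ together with dominated convergence using $|V_2(x)|\to 0$ at infinity---is the standard route and matches what one would expect the cited proof to contain. No gaps.
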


\begin{lemma}
(\cite[Lemma 2.6]{VY25})\label{T2.3} Let $(v_{n},\lambda _{n})$ be a
Palais-Smale sequence for $J_{\mu }$ on $\mathcal{M}$:
\begin{equation*}
\{v_{n}\}\subset \mathcal{M},\ J_{\mu }\left( v_{n}\right) \rightarrow c\in
\mathbb{R},\ \Vert J_{\mu }^{\prime }\left( v_{n}\right) \Vert \rightarrow 0,
\end{equation*}%
as $n\rightarrow +\infty $, and assume that $\left( v_{n},\lambda
_{n}\right) $ is bounded and $\Vert v_{n}^{-}\Vert _{H^{1}}\rightarrow 0$.
Then, up to subsequences, there exist $v_{0}\in H^{1}$, $v_{0}\geq 0$, and $%
\lambda _{0}\in \mathbb{R}$ such that $v_{n}\rightharpoonup v_{0}$ weakly in
$H^{1}(\mathbb{R}^{N})$, $\lambda _{n}\rightarrow \lambda _{0}$ and
\begin{equation*}
-\Delta v_{0}+V(x)v_{0}=\mu \left( v_{0}\right) ^{2^{\ast }-1}+\lambda
_{0}v_{0}.
\end{equation*}%
Moreover, only one of the three following alternatives holds:\newline
$(i)$ either $\lambda _{0}<0$ and $v_{n}\rightarrow v_{0}$ strongly in $%
H^{1}(\mathbb{R}^{N})$,\newline
$(ii)$ or $\lambda _{0}<0$, $v_{n}\rightharpoonup v_{0}$ in $H^{1}(\mathbb{R}%
^{N})$ (but not strongly), and
\begin{equation*}
J_{\mu }\left( v_{n}\right) \geq J_{\mu }\left( v_{0}\right) +\frac{1}{N}%
\mathcal{S}^{\frac{N}{2}}\mu ^{1-\frac{N}{2}}+o(1),\ \text{as}\ n\rightarrow
+\infty ,
\end{equation*}%
$(iii)$ or $\lambda _{0}\geq 0$, and in such case $v_{0}\equiv 0$.
\end{lemma}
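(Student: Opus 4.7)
The plan is to carry out the classical Brezis-Lieb / concentration-compactness analysis on the bounded Palais-Smale sequence $(v_n,\lambda_n)$: extract a weak limit, verify it satisfies the limit equation, decompose the energy into the limit energy plus a remainder, and then split into three cases according to the sign of $\lambda_0$ and whether $w_n:=v_n-v_0$ vanishes strongly in $H^1$. The structural decomposition of $V$ in \eqref{e3} is essential for pushing the potential term through weak limits. First I would extract a subsequence with $v_n\rightharpoonup v_0$ in $H^1(\mathbb{R}^N)$, $v_n\to v_0$ a.e.\ and in $L^p_{\mathrm{loc}}$ for every $p\in[2,2^\ast)$, and $\lambda_n\to\lambda_0$, using the boundedness of $(v_n,\lambda_n)$. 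The hypothesis $\|v_n^-\|_{H^1}\to 0$ together with a.e.\ convergence forces $v_0\geq 0$. Testing the Palais-Smale relation $J_\mu'(v_n)-\lambda_n v_n\to 0$ in $H^{-1}$ against $\phi\in C_c^\infty(\mathbb{R}^N)$ and passing term by term to the limit---local Sobolev compactness handles the critical term, and Lemma~\ref{T2.1} together with an approximation of $V_1\in L^{N/2}$ by compactly supported functions handles the potential term---yields the limiting equation $-\Delta v_0+Vv_0=\mu v_0^{2^\ast-1}+\lambda_0 v_0$.

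Next, setting $w_n=v_n-v_0$, I would apply Brezis-Lieb in $L^{2^\ast}$ to get $\|v_n\|_{2^\ast}^{2^\ast}=\|v_0\|_{2^\ast}^{2^\ast}+\|w_n\|_{2^\ast}^{2^\ast}+o(1)$, and the Hilbert orthogonality to get $\|\nabla v_n\|_2^2=\|\nabla v_0\|_2^2+\|\nabla w_n\|_2^2+o(1)$ and $\|v_n\|_2^2=\|v_0\|_2^2+\|w_n\|_2^2+o(1)$. A crucial compactness statement is $\int V w_n^2\to 0$, which I would establish by splitting $V=V_1+V_2$, truncating $V_1\in L^{N/2}$ via compactly supported cut-offs and using local $L^{2^\ast}$-compactness, while exploiting $|V_2|\to 0$ at infinity to handle the tail. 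Combining these yields the energy splitting
\[ J_\mu(v_n)=J_\mu(v_0)+\tfrac{1}{2}\|\nabla w_n\|_2^2-\tfrac{\mu}{2^\ast}\|w_n\|_{2^\ast}^{2^\ast}+o(1). \]
Testing the Palais-Smale relation against $v_n$ (using $\|v_n\|_2=1$) and subtracting the equation for $v_0$ tested against $v_0$ then yields the key identity
\[ \|\nabla w_n\|_2^2-\mu\|w_n\|_{2^\ast}^{2^\ast}=\lambda_0\bigl(1-\|v_0\|_2^2\bigr)+o(1). \]

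Finally, letting $A:=\lim\|\nabla w_n\|_2^2$ and $B:=\lim\|w_n\|_{2^\ast}^{2^\ast}$ along a further subsequence, the Sobolev inequality gives $A\geq\mathcal{S}B^{2/2^\ast}$. If $\lambda_0<0$ and $A=0$, then $B=0$; combined with the key identity one obtains $\|v_0\|_2=1$ and hence strong $H^1$ convergence (case (i)). If $\lambda_0<0$ and $A>0$, then $A=\mu B+\lambda_0(1-\|v_0\|_2^2)\leq\mu B$ combined with Sobolev forces $B\geq(\mathcal{S}/\mu)^{N/2}$, and algebraic manipulation of the energy splitting produces the bubble-energy bound of case (ii). The main obstacle---the essential analytic content of the lemma---is ruling out the residual scenario $\lambda_0\geq 0$ with $v_0\not\equiv 0$. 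Here I would apply the Pohozaev identity to the limit equation (whose validity for the irregular $V$ relies on the decomposition $W=V|x|=W_1+W_2\in L^N+L^\infty$ together with the bounds of Lemma~\ref{T2.1}, through a smooth approximation of $V$) and combine it with the equation tested against $v_0$ to eliminate $\mu\|v_0\|_{2^\ast}^{2^\ast}$, obtaining
\[ \lambda_0\|v_0\|_2^2=-\tfrac{N-2}{2}\int V v_0^2\,dx-\int V v_0\,(x\cdot\nabla v_0)\,dx. \]
The structural assumptions \eqref{e3}--\eqref{e4} would then be used to force the right-hand side to be strictly negative for any nontrivial $v_0\geq 0$, contradicting $\lambda_0\geq 0$, whence $v_0\equiv 0$, which is case (iii).
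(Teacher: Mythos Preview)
This lemma is not proved in the present paper; it is quoted without proof from \cite[Lemma~2.6]{VY25}, so there is no in-paper argument to compare your attempt against. Your overall strategy---extract a weak limit, pass to the limit in the equation, perform a Brezis--Lieb splitting, derive the key identity $A-\mu B=\lambda_0(1-\|v_0\|_2^2)$ for $A=\lim\|\nabla w_n\|_2^2$ and $B=\lim\|w_n\|_{2^\ast}^{2^\ast}$, and then run a trichotomy---is the standard one and is almost certainly the route taken in \cite{VY25}. One minor slip: there is no ``local $L^{2^\ast}$-compactness''; to show $\int V_1w_n^2\to0$ you should approximate $V_1\in L^{N/2}$ by functions in $L^\infty_c$ and use local $L^2$-compactness.

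Two steps, however, are genuinely incomplete. In case~(ii), ``algebraic manipulation of the energy splitting produces the bubble-energy bound'' hides a real difficulty: writing $\tfrac{A}{2}-\tfrac{\mu}{2^\ast}B=\tfrac{A}{N}+\tfrac{1}{2^\ast}(A-\mu B)$ and using $A\ge\mathcal S^{N/2}\mu^{1-N/2}$ still leaves the \emph{negative} contribution $\tfrac{1}{2^\ast}\lambda_0(1-\|v_0\|_2^2)$, so the stated lower bound does not follow by pure algebra unless one first rules out $L^2$-mass loss (i.e.\ shows $\|v_0\|_2=1$); this typically needs a Struwe-type profile decomposition or an approximate Pohozaev identity for the sequence. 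In case~(iii), your assertion that \eqref{e3}--\eqref{e4} force
\[
-\tfrac{N-2}{2}\int_{\mathbb R^N} Vv_0^2\,dx-\int_{\mathbb R^N} Vv_0\,(x\cdot\nabla v_0)\,dx<0
\]
for every nontrivial $v_0\ge0$ is unjustified and, as written, looks false: for the purely attractive potentials $V\le0$ allowed by \eqref{e3}--\eqref{e4} (cf.\ Remark~\ref{T1.5}) the first integral is \emph{nonnegative}, so the sign of the whole expression is not at all automatic. The exclusion of nontrivial solutions with $\lambda_0\ge0$ in \cite{VY25} is handled by a separate nonexistence lemma (their Lemma~2.4, invoked here in the proof of Lemma~\ref{T2.6}) which exploits the hypotheses on $W=V|x|$, not merely \eqref{e4}; your sketch does not supply that ingredient.
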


\begin{lemma}
\label{T2.4} Assume that conditions (\ref{e3})--(\ref{e5}) hold. Then
there exists
\begin{equation}
\overline{t}_{\mu }>\left( \frac{1-\Vert V_{1}^{-}\Vert _{N/2}\mathcal{S}%
^{-1}}{\mu \mathcal{S}^{-\frac{N}{N-2}}}\right) ^{\frac{N-2}{4}}
\label{e12}
\end{equation}%
such that
\begin{equation*}
\beta _{\mu }:=\inf\limits_{\Vert v\Vert _{2}^{2}\leq 1}\sup \left\{ J_{\mu
}(v):0\leq \Vert \nabla v\Vert _{2}\leq \overline{t}_{\mu }\right\} >0.
\end{equation*}
\end{lemma}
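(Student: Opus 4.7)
The plan is to combine the Sobolev inequality with the estimates from Lemma~\ref{T2.1} to obtain a uniform lower bound on $J_\mu(v)$ that depends only on $\|\nabla v\|_2$, and then study the resulting one-variable function. For any $v$ with $\|v\|_2^2 \le 1$, I would use Lemma~\ref{T2.1} to write
\begin{equation*}
\int_{\mathbb{R}^N} V(x)v^2\,dx \;\ge\; -\,\mathcal{S}^{-1}\|V_1^-\|_{N/2}\,\|\nabla v\|_2^2 \;-\; \|V_2^-\|_\infty\,\|v\|_2^2 \;\ge\; -\,\mathcal{S}^{-1}\|V_1^-\|_{N/2}\,\|\nabla v\|_2^2 - \|V_2^-\|_\infty,
\end{equation*}
and bound the critical term via $\|v\|_{2^\ast}^{2^\ast} \le \mathcal{S}^{-N/(N-2)}\|\nabla v\|_2^{2N/(N-2)}$. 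Setting $s=\|\nabla v\|_2$, these estimates give $J_\mu(v) \ge h(s)$ with
\begin{equation*}
h(s) \;:=\; \frac{1}{2}\bigl(1-\mathcal{S}^{-1}\|V_1^-\|_{N/2}\bigr)\,s^{2} \;-\; \frac{\mu}{2^\ast}\,\mathcal{S}^{-N/(N-2)}\,s^{2^\ast} \;-\; \frac{1}{2}\|V_2^-\|_\infty.
\end{equation*}

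Next I would analyze $h$. By the first part of (\ref{e5}), the leading coefficient $A := 1-\mathcal{S}^{-1}\|V_1^-\|_{N/2}$ is strictly positive, so $h$ increases on $[0,s_0]$ and decreases thereafter, where the critical point is computed from $h'(s_0)=0$ as
\begin{equation*}
s_0 \;=\; \left(\frac{1-\mathcal{S}^{-1}\|V_1^-\|_{N/2}}{\mu\,\mathcal{S}^{-N/(N-2)}}\right)^{(N-2)/4}.
\end{equation*}
A direct substitution (using $\frac{1}{2}-\frac{1}{2^\ast}=\frac{1}{N}$ and $\mu^{-(N-2)/2}=a^{-2}$) yields
\begin{equation*}
h(s_0) \;=\; \frac{1}{N}\,\bigl(1-\mathcal{S}^{-1}\|V_1^-\|_{N/2}\bigr)^{N/2}\,\mathcal{S}^{N/2}\,a^{-2} \;-\; \frac{1}{2}\|V_2^-\|_\infty,
\end{equation*}
which is strictly positive exactly under the second inequality of (\ref{e5}). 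This is the point where the precise form of the hypothesis (\ref{e5}) enters, and this matching of constants is the only slightly delicate computation.

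To finish, I would choose any $\overline{t}_\mu$ strictly larger than $s_0$ (so that (\ref{e12}) holds). Given any $v$ with $\|v\|_2^2 \le 1$, the scaled family $v_t=t^{N/2}v(tx)$ preserves the $L^2$-norm and has $\|\nabla v_t\|_2 = t\|\nabla v\|_2$, so the gradient norm sweeps $[0,\infty)$. Choosing $t_\ast = s_0/\|\nabla v\|_2$ produces an admissible element $v_{t_\ast}$ with $\|\nabla v_{t_\ast}\|_2 = s_0 \le \overline{t}_\mu$, and my pointwise bound gives $J_\mu(v_{t_\ast}) \ge h(s_0)$. Therefore
\begin{equation*}
\sup\bigl\{J_\mu(w) : \|w\|_2^2\le 1,\ 0\le\|\nabla w\|_2\le \overline{t}_\mu\bigr\} \;\ge\; h(s_0) \;>\; 0
\end{equation*}
uniformly in $v$, so $\beta_\mu \ge h(s_0) > 0$. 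Since the whole argument depends only on the gradient norm, the inner sup is attained at (or above) $s_0$ regardless of $v$; the only real obstacle is the algebraic identification of $h(s_0)$ with the threshold in (\ref{e5}), which is exactly what makes (\ref{e5}) the sharp sufficient condition.
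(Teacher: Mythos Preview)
Your approach is essentially identical to the paper's: both derive the pointwise lower bound $J_\mu(v)\ge f(\|\nabla v\|_2)$ via Lemma~\ref{T2.1} and Sobolev (your $h$ is their $f$, your $s_0$ is their $t_\ast$), and both verify that the maximum $f(t_\ast)>0$ is exactly equivalent to the second inequality in (\ref{e5}).

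The one substantive difference is your choice of $\overline{t}_\mu$. You take \emph{any} $\overline{t}_\mu>s_0$, whereas the paper takes $\overline{t}_\mu$ to be the \emph{larger root} of $f$, i.e.\ the unique $\overline{t}_\mu>t_\ast$ with $f(\overline{t}_\mu)=0$. For the statement of Lemma~\ref{T2.4} alone your choice suffices, but the paper's specific choice is not incidental: it guarantees $f(t)\ge f(0)=-\tfrac12\|V_2^-\|_\infty$ on the whole interval $[0,\overline{t}_\mu]$, which is what yields the lower bound $\alpha_\mu\ge-\tfrac12\|V_2^-\|_\infty$ in Lemma~\ref{T2.5} and is used again in (\ref{e16}) and in Lemma~\ref{T2.6}. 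If you pick $\overline{t}_\mu$ too large (past the second root), those later estimates break. So while your argument proves the lemma as stated, you should adopt the paper's precise choice of $\overline{t}_\mu$ for the downstream results to go through. Your final scaling paragraph is unnecessary once the pointwise bound $J_\mu(v)\ge f(\|\nabla v\|_2)$ is in hand; the paper simply concludes directly from $\max_{[0,\overline{t}_\mu]}f=f(t_\ast)>0$.
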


\begin{proof}
For each $v\in H^{1}(\mathbb{R}^{N})$ and $\Vert v\Vert _{2}^{2}\leq 1$, it
follows from Lemma \ref{T2.1} that
\begin{eqnarray}
J_{\mu }(v) &\geq &\frac{1}{2}\Vert \nabla v\Vert _{2}^{2}-\frac{\mu }{%
2^{\ast }}\mathcal{S}^{-\frac{N}{N-2}}\Vert \nabla v\Vert _{2}^{2^{\ast}}-\frac{1}{2}\mathcal{S}^{-1}\Vert V_{1}^{-}\Vert _{N/2}\Vert
\nabla v\Vert _{2}^{2}-\frac{1}{2}\Vert V_{2}^{-}\Vert _{\infty }\Vert
v\Vert _{2}^{2}  \notag \\
&\geq &\frac{1}{2}\left( 1-\Vert V_{1}^{-}\Vert _{N/2}\mathcal{S}%
^{-1}\right) \Vert \nabla v\Vert _{2}^{2}-\frac{\mu }{2^{\ast }}\mathcal{S}%
^{-\frac{N}{N-2}}\Vert \nabla v\Vert ^{2^{\ast}}_{2}-\frac{1}{2}\Vert
V_{2}^{-}\Vert _{\infty }  \notag \\
&=&f(\Vert \nabla v\Vert _{2}),  \label{e13}
\end{eqnarray}%
where%
\begin{equation*}
f(t):=\frac{1}{2}\left( 1-\Vert V_{1}^{-}\Vert _{N/2}\mathcal{S}^{-1}\right)
t^{2}-\frac{\mu }{2^{\ast }}\mathcal{S}^{-\frac{N}{N-2}}t^{2^{\ast }}-\frac{1%
}{2}\Vert V_{2}^{-}\Vert _{\infty }\text{ for }t>0.
\end{equation*}%
According to condition (\ref{e5}), we calculate that
\begin{equation*}
f(t_{\ast })=\max\limits_{t>0}f(t)=\frac{1}{N}\left( 1-\Vert V_{1}^{-}\Vert
_{N/2}\mathcal{S}^{-1}\right) ^{\frac{N}{2}}\mu ^{1-\frac{N}{2}}\mathcal{S}^{%
\frac{N}{2}}-\frac{1}{2}\Vert V_{2}^{-}\Vert _{\infty }>0,
\end{equation*}%
where%
\begin{equation}
t_{\ast }:=\left( \frac{1-\Vert V_{1}^{-}\Vert _{N/2}\mathcal{S}^{-1}}{\mu
\mathcal{S}^{-\frac{N}{N-2}}}\right) ^{\frac{N-2}{4}}.  \label{e14}
\end{equation}%
Moreover, there exist $\widetilde{t}_{\mu }<t_{\ast }<\overline{t}_{\mu }$
such that $f(\widetilde{t}_{\mu })=f(\overline{t}_{\mu })=0,$ $f(t)<0$ for
any $t\in \left( 0,\widetilde{t}_{\mu }\right) \cup \left( \overline{t}_{\mu
},\infty \right) $ and $f(t)>0$ for any $t\in \left( \widetilde{t}_{\mu },%
\overline{t}_{\mu }\right) $. Thus, it holds
\begin{equation}
f(t_{\ast })=\max\limits_{t\in (0,\overline{t}_{\mu })}f(t)>0.  \label{e15}
\end{equation}%
Using (\ref{e13}) and (\ref{e15}), we conclude that $\beta _{\mu }>0$. The
proof is complete.
\end{proof}

Set
\begin{equation*}
\alpha _{\mu }:=\inf \left\{ J_{\mu }(v):v\in \mathcal{M},\ \Vert \nabla
v\Vert _{2}\leq \overline{t}_{\mu }\right\} ,
\end{equation*}%
where $\overline{t}_{\mu }$ is as in (\ref{e12}). Then we have the
following lemma.

\begin{lemma}
\label{T2.5} Assume that conditions (\ref{e3})--(\ref{e5}) hold. Then $%
\alpha _{\mu }<0$ is achieved. Moreover, there holds $\alpha _{\mu }\geq -%
\frac{1}{2}\Vert V_{2}^{-}\Vert _{\infty }$.
\end{lemma}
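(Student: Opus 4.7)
The plan is to split the claim into three tasks: the lower bound $\alpha_\mu \geq -\frac{1}{2}\Vert V_2^-\Vert_\infty$, the strict inequality $\alpha_\mu < 0$, and the existence of a minimizer.

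For the lower bound I would reuse the pointwise inequality $J_\mu(v) \geq f(\Vert\nabla v\Vert_2)$ from (\ref{e13}), valid for every $v \in \mathcal{M}$ since $\Vert v\Vert_2 = 1$. Inspection of $f$ shows that $f(0) = -\frac{1}{2}\Vert V_2^-\Vert_\infty$ is actually its minimum on $[0,\overline{t}_\mu]$: we have $f'(0) = 0$, $f$ rises to $f(t_\ast) > 0$, and descends again to $f(\overline{t}_\mu) = 0 > f(0)$. Hence $\alpha_\mu \geq f(0)$.

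For $\alpha_\mu < 0$ I would test with a minimizing sequence $\{\psi_n\} \subset \mathcal{M}$ for $\widetilde{\lambda}_1$, so $Q(\psi_n) := \int(|\nabla\psi_n|^2 + V\psi_n^2)dx \to \widetilde{\lambda}_1 < 0$. Setting $A := 1 - \Vert V_1^-\Vert_{N/2}\mathcal{S}^{-1} > 0$, Lemma \ref{T2.1} yields $Q(\psi_n) \geq A\Vert\nabla\psi_n\Vert_2^2 - \Vert V_2^-\Vert_\infty$, so for large $n$, $\Vert\nabla\psi_n\Vert_2^2 \leq \Vert V_2^-\Vert_\infty/A$. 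A direct calculation from (\ref{e5}), using $\mu^{(N-2)/2} = a^2$ so that $a^{-2} = \mu^{1-N/2}$, gives $\Vert V_2^-\Vert_\infty/A < \frac{2}{N}t_\ast^2 < \overline{t}_\mu^2$, making $\psi_n$ admissible for $\alpha_\mu$. Dropping the nonnegative $L^{2^\ast}$-term, $\alpha_\mu \leq J_\mu(\psi_n) \leq \frac{1}{2}Q(\psi_n) \to \frac{1}{2}\widetilde{\lambda}_1 < 0$.

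For attainment I would take any nonnegative (replacing $v_n$ by $|v_n|$, which does not increase $J_\mu$) minimizing sequence $\{v_n\}$; since $J_\mu(v_n) \to \alpha_\mu < 0$ while $f \geq 0$ on $[\widetilde{t}_\mu, \overline{t}_\mu]$, we must have $\Vert\nabla v_n\Vert_2 < \widetilde{t}_\mu$ eventually, so the gradient bound is not active. Ekeland's principle on $\mathcal{M}$ then furnishes a Palais--Smale pair $(v_n, \lambda_n)$ with $\lambda_n$ bounded (test against $v_n$) and $v_n^- \equiv 0$, to which Lemma \ref{T2.3} applies. The main obstacle is ruling out alternatives (ii) and (iii). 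In case (iii), $v_n \rightharpoonup 0$ and, because $V_1 \in L^{N/2}$ can be approximated by bounded compactly supported functions while $V_2 \in L^\infty$ vanishes at infinity, $\int V v_n^2\, dx \to 0$; hence $J_\mu(v_n) + o(1) \geq g(\Vert\nabla v_n\Vert_2)$ with $g(t) := \frac{1}{2}t^2 - \frac{\mu}{2^\ast}\mathcal{S}^{-N/(N-2)}t^{2^\ast}$, and using $f(\overline{t}_\mu)=0$ one finds $g(\overline{t}_\mu) = \frac{1-A}{2}\overline{t}_\mu^2 + \frac{1}{2}\Vert V_2^-\Vert_\infty > 0$, so $g \geq 0$ on $[0,\overline{t}_\mu]$, giving $\alpha_\mu \geq 0$, a contradiction. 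In case (ii), the energy gap $J_\mu(v_n) \geq J_\mu(v_0) + \frac{1}{N}\mathcal{S}^{N/2}\mu^{1-N/2}$ combined with $J_\mu(v_0) \geq -\frac{1}{2}\Vert V_2^-\Vert_\infty$ (from (\ref{e13}), valid since $\Vert v_0\Vert_2 \leq 1$ and $\Vert\nabla v_0\Vert_2 \leq \overline{t}_\mu$ by weak lower semicontinuity) yields $\alpha_\mu \geq \frac{1}{N}\mathcal{S}^{N/2}\mu^{1-N/2} - \frac{1}{2}\Vert V_2^-\Vert_\infty$; since $A < 1$, (\ref{e5}) gives $\Vert V_2^-\Vert_\infty < \frac{2}{N}\mathcal{S}^{N/2}\mu^{1-N/2}$, so $\alpha_\mu > 0$, again a contradiction. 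Therefore alternative (i) holds, $v_n \to v_0$ strongly in $H^1$, $v_0 \in \mathcal{M}$ with $\Vert\nabla v_0\Vert_2 \leq \overline{t}_\mu$, and $J_\mu(v_0) = \alpha_\mu$.
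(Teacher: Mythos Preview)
Your proof is correct and follows essentially the same route as the paper, which simply cites \cite[Lemma~4.2]{VY25} for the attainment and strict negativity and invokes Lemma~\ref{T2.4} for the lower bound; you have effectively unpacked that reference. One minor imprecision: after passing to $|v_n|$ and applying Ekeland's principle you do not get $v_n^{-}\equiv 0$ literally, but rather $\Vert v_n^{-}\Vert_{H^{1}}\to 0$ (since the Ekeland-perturbed sequence stays $H^{1}$-close to the nonnegative one), which is exactly the hypothesis required in Lemma~\ref{T2.3}.
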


\begin{proof}
Similar to the argument of \cite[Lemma 4.2]{VY25}, we can prove that $\alpha
_{\mu }<0$ is achieved. Since $\Vert \nabla v\Vert _{2}<\overline{t}_{\mu }$%
, by Lemma \ref{T2.4}, it follows that $\alpha _{\mu }\geq -\frac{1}{2}\Vert
V_{2}^{-}\Vert _{\infty }$. The proof is complete.
\end{proof}

Let $\zeta _{\mu }=\left\{ v\in \mathcal{M}:J_{\mu }(v)=\alpha _{\mu
}\right\} $. It follows from Lemmas \ref{T2.4} and \ref{T2.5} that $\zeta
_{\mu }$ contains the solution given in \cite[Theorem 1.3]{VY25}. In fact,
by Lemma \ref{T2.4}, we can see that if $f\left( \mathcal{S}^{\frac{N}{4}%
}\mu ^{-\frac{N-2}{4}}\right) >0$, then $\overline{t}_{\mu }>\mathcal{S}^{%
\frac{N}{4}}\mu ^{-\frac{N-2}{4}}$. Combining with Lemmas \ref{T2.4} and \ref%
{T2.5}, we also derive that
\begin{equation}
\alpha _{\mu }=\inf \left\{ J_{\mu }(v):v\in \mathcal{M},\ \Vert \nabla
v\Vert _{2}\leq t_{\ast }\right\}
 ,  \label{e16}
\end{equation}%
where $t_{\ast }$ is as in (\ref{e14}). Then we show that the local
minimizer is a ground state.

\begin{lemma}
\label{T2.6} Assume that conditions (\ref{e3})--(\ref{e7}) hold. If $%
v\in H^{1}(\mathbb{R}^{N})$ satisfies
\begin{equation}
\left\{
\begin{array}{ll}
-\Delta v+V(x)v=\lambda v+\mu |v|^{2^{\ast }-2}v & \ \text{in}\ \mathbb{R}%
^{N}, \\
\Vert v\Vert _{2}=a\leq 1, &
\end{array}%
\right.  \label{e17}
\end{equation}%
with $J_{\mu }(v)<0$. Then $\Vert \nabla v\Vert _{2}<\overline{t}_{\mu }$
and $\lambda <0$. Furthermore, there holds%
\begin{equation*}
J_{\mu }(v)\geq \alpha _{\mu }\geq -\frac{1}{2}\Vert V_{2}^{-}\Vert _{\infty
}.
\end{equation*}
\end{lemma}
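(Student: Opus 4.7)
The plan is to derive two integral identities from the equation in (\ref{e17}), then combine them with the hypothesis $J_\mu(v)<0$ and the smallness conditions (\ref{e5})--(\ref{e7}). Testing the equation against $v$ yields the Nehari-type identity
\[
\|\nabla v\|_2^2+\int_{\mathbb{R}^N}V v^2\,dx-\mu\|v\|_{2^*}^{2^*}=\lambda a^2,\qquad (I)
\]
while the $L^2$-preserving fiber $v_t=t^{N/2}v(tx)$ keeps $v$ on $\mathcal{M}_a$, so the constrained critical-point condition $\frac{d}{dt}J_\mu(v_t)\big|_{t=1}=0$ produces the Pohozaev-type identity
\[
\|\nabla v\|_2^2+\tfrac{N}{2}\int V v^2\,dx+\int V v\,\nabla v\cdot x\,dx=\mu\|v\|_{2^*}^{2^*}.\qquad (II)
\]

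The negativity of $\lambda$ is then immediate: combining $(I)$ with the identity $2J_\mu(v)=\|\nabla v\|_2^2+\int V v^2-\tfrac{2\mu}{2^*}\|v\|_{2^*}^{2^*}$ yields
\[
\lambda a^2=2J_\mu(v)-\tfrac{2\mu}{N}\|v\|_{2^*}^{2^*}<0.
\]

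The main step is proving $\|\nabla v\|_2<\bar t_\mu$. Using $(II)$ to eliminate $\mu\|v\|_{2^*}^{2^*}$ from $2J_\mu(v)<0$ produces
\[
\tfrac{2}{N}\|\nabla v\|_2^2<\tfrac{N-4}{2}\int V v^2\,dx+\tfrac{N-2}{N}\int V v\,\nabla v\cdot x\,dx.\qquad (\star)
\]
For $N\geq 5$ the coefficient $\tfrac{N-4}{2}$ is positive, so I bound $\int V v^2\leq\int V_1^+v^2+\int V_2^+v^2\leq\mathcal{S}^{-1}\|V_1^+\|_{N/2}\|\nabla v\|_2^2+\|V_2^+\|_\infty a^2$ and apply Lemma \ref{T2.1} to $\int V v\,\nabla v\cdot x$; then $(\star)$ becomes a quadratic inequality in $\|\nabla v\|_2$ whose leading coefficient is strictly positive once $\|V_1^+\|_{N/2}$ and $\|W_1\|_N$ are sufficiently small (the role of $C_2$ in (\ref{e7})), producing an explicit upper bound. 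Choosing $C_3$ so that $a^2\|V_2^+\|_\infty$ and $a\|W_2\|_\infty$ are small then guarantees this bound lies strictly below $\bar t_\mu$ (recall $\bar t_\mu>t_\ast$ by Lemma \ref{T2.4}). For $N=3,4$ the coefficient $\tfrac{N-4}{2}$ is nonpositive, so I instead estimate $\tfrac{N-4}{2}\int V v^2\leq\tfrac{|N-4|}{2}\int V^-v^2$ via $V^-\leq V_1^-+V_2^-$ and (\ref{e5}); the analogous quadratic analysis under (\ref{e6}) again delivers $\|\nabla v\|_2<\bar t_\mu$.

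Finally, with $\|\nabla v\|_2<\bar t_\mu$ and $\|v\|_2=a\leq 1$ in hand, the pointwise bound $J_\mu(v)\geq f(\|\nabla v\|_2)$ from the proof of Lemma \ref{T2.4} combined with $\min_{[0,\bar t_\mu]}f=-\tfrac{1}{2}\|V_2^-\|_\infty$ immediately gives $J_\mu(v)\geq-\tfrac{1}{2}\|V_2^-\|_\infty$, and the chain $J_\mu(v)\geq\alpha_\mu\geq-\tfrac{1}{2}\|V_2^-\|_\infty$ then closes via the definition of $\alpha_\mu$ and Lemma \ref{T2.5}. The principal obstacle throughout is the bookkeeping needed to ensure that, in each dimensional regime, the quadratic bound extracted from $(\star)$ lies strictly below $\bar t_\mu$; this is precisely the content that conditions (\ref{e6})--(\ref{e7}) are engineered to guarantee.
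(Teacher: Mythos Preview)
Your overall strategy---combine the Pohozaev identity with $J_\mu(v)<0$ to extract a quadratic inequality in $\|\nabla v\|_2$, then use the smallness conditions (\ref{e6})--(\ref{e7}) to force $\|\nabla v\|_2<\bar t_\mu$---matches the paper's. Your derivation of $\lambda<0$ via $\lambda a^2 = 2J_\mu(v)-\tfrac{2\mu}{N}\|v\|_{2^*}^{2^*}$ is in fact more direct than the paper's, which defers to \cite[Lemma~2.4]{VY25}. One minor difference: for $N=3,4$ the paper does \emph{not} work with your inequality $(\star)$ but with the combination (\ref{e18}), in which the troublesome term carries the factor $\tfrac{N-4}{2}\mu\|v\|_{2^*}^{2^*}\le 0$ and can simply be dropped. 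This yields a bound depending only on $W$, which is why condition (\ref{e6}) involves no $V^{\pm}$ at all; your route through $(\star)$ drags in $\int V^- v^2$ and hence an extra dependence on (\ref{e5}), which works but is less clean.

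There is, however, a genuine gap in your last paragraph. You claim that ``the chain $J_\mu(v)\ge\alpha_\mu\ge-\tfrac12\|V_2^-\|_\infty$ closes via the definition of $\alpha_\mu$''. But $\alpha_\mu$ is the infimum over $\mathcal{M}=\{\|\cdot\|_2=1\}$, whereas the lemma allows $\|v\|_2=a<1$, so $v\notin\mathcal{M}$ and the definition of $\alpha_\mu$ gives you nothing directly. Your pointwise bound $J_\mu(v)\ge f(\|\nabla v\|_2)\ge-\tfrac12\|V_2^-\|_\infty$ is correct but only yields the \emph{weaker} inequality $J_\mu(v)\ge -\tfrac12\|V_2^-\|_\infty$; since $\alpha_\mu$ lies strictly between these two values in general, the middle inequality $J_\mu(v)\ge\alpha_\mu$ is not established. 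The paper closes this gap by rescaling: set $w=v/a\in\mathcal{M}$; if $\|\nabla w\|_2<\bar t_\mu$ then $J_\mu(w)\ge\alpha_\mu$, and since $J_\mu(v)=J_\mu(aw)\ge a^2 J_\mu(w)\ge a^2\alpha_\mu\ge\alpha_\mu$ (using $a\le1$, $2^*>2$, $\alpha_\mu<0$) one is done. If instead $\|\nabla w\|_2\ge\bar t_\mu$, pick $T\in(1,1/a]$ with $\|\nabla(Tv)\|_2=\bar t_\mu$; then $\|Tv\|_2\le1$ forces $J_\mu(Tv)\ge f(\bar t_\mu)=0$, contradicting $J_\mu(Tv)<T^2 J_\mu(v)<0$. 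This scaling/contradiction step is the missing idea in your argument.
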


\begin{proof}
Since $v\in H^{1}(\mathbb{R}^{N})$ solves problem (\ref{e17}), it is clear
that $v$ satisfies the following Pohozaev identity
\begin{equation*}
\Vert \nabla v\Vert _{2}^{2}-\mu \Vert v\Vert _{2^{\ast }}^{2^{\ast }}+\frac{%
1}{2}\int_{\mathbb{R}^{N}}V(x)\left( Nv^{2}+2v\nabla v\cdot x\right) dx=0.
\end{equation*}%
Using this, together with $J_{\mu }(v)<0$, we deduce that
\begin{equation}
\frac{N-2}{2}\Vert \nabla v\Vert _{2}^{2}<\frac{N-4}{2}\Vert v\Vert
_{2^{\ast }}^{2^{\ast }}+\int_{\mathbb{R}^{N}}V(x)v\nabla v\cdot
xdx  \label{e18}
\end{equation}%
and
\begin{equation}
\frac{1}{N}\Vert \nabla v\Vert _{2}^{2}<\frac{N-4}{4}\int_{\mathbb{R}%
^{N}}V(x)v^{2}dx+\frac{N-2}{2N}\int_{\mathbb{R}^{N}}V(x)v\nabla v\cdot xdx.
\label{e19}
\end{equation}%
If $N=3,4$, by Lemma \ref{T2.1} and (\ref{e18}), we have
\begin{eqnarray*}
\frac{N-2}{2}\Vert \nabla v\Vert _{2}^{2} &<&\int_{\mathbb{R}%
^{N}}V(x)v\nabla v\cdot xdx \\
&\leq &\left( \mathcal{S}^{-1/2}\Vert W_{1}\Vert _{N}\Vert \nabla
v\Vert _{2}^{2}+\Vert W_{2}\Vert _{\infty }\Vert \nabla v\Vert _{2}\right) ,
\end{eqnarray*}%
which implies that
\begin{equation*}
\left( \frac{N-2}{2}-\mathcal{S}^{-1/2}\Vert W_{1}\Vert _{N}\right) \Vert
\nabla v\Vert _{2}<\Vert W_{2}\Vert _{\infty }.
\end{equation*}%
By (\ref{e6}), we choose an appropriate constant $C_{1}>0$ such that%
\begin{equation*}
\Vert W_{1}\Vert _{N}<\frac{N-2}{2}\mathcal{S}^{1/2}\text{ and }\frac{2\Vert
W_{2}\Vert _{\infty }}{N-2-2\mathcal{S}^{-1/2}\Vert W_{1}\Vert _{N}}<%
\overline{t}_{\mu }.
\end{equation*}%
If $N\geq 5$, by Lemma \ref{T2.1} and (\ref{e19}), one has
\begin{eqnarray}
&&\left( \frac{1}{N}-\frac{N-4}{4}\Vert V_{1}^{+}\Vert _{N/2}\mathcal{S}%
^{-1}-\frac{N-2}{2N}\mathcal{S}^{-1/2}\Vert W_{1}\Vert _{N}\right) \Vert
\nabla v\Vert _{2}^{2}  \notag \\
&<&\frac{N-2}{2N}\Vert W_{2}\Vert _{\infty }\Vert \nabla v\Vert _{2}+\frac{%
N-4}{4}\Vert V_{2}^{+}\Vert _{\infty }.  \label{e20}
\end{eqnarray}%
Define
\begin{equation}
h(t)=At^{2}-Bt-C\text{ for }t>0,  \label{e21}
\end{equation}%
where%
\begin{equation*}
\begin{array}{l}
A:=\frac{1}{N}-\frac{N-4}{4}\Vert V_{1}^{+}\Vert _{N/2}\mathcal{S}^{-1}-%
\frac{N-2}{2N}\mathcal{S}^{-\frac{1}{2}}\Vert W_{1}\Vert _{N}, \\
B:=\frac{N-2}{2N}\Vert W_{2}\Vert _{\infty }, \\
C:=\frac{N-4}{4}\Vert V_{2}^{+}\Vert _{\infty }.%
\end{array}%
\end{equation*}%
Then by (\ref{e7}), we choose appropriate constants $C_{2},C_{3}>0$ such
that $A>0$ and
\begin{equation*}
\widetilde{t}_{\ast }:=\frac{B}{2A}+\sqrt{\frac{B^{2}}{4A^{2}}+\frac{C}{A}}<%
\overline{t}_{\mu },
\end{equation*}%
where $\widetilde{t}_{\ast }$ satisfies $h\left( \widetilde{t}_{\ast
}\right) =0$ and $h\left( t\right) <0$ for any $t\in \left( 0,\widetilde{t}%
_{\ast }\right) $, and $\overline{t}_{\mu }$ is as in (\ref{e12}). So it
follows from (\ref{e20}) and (\ref{e21}) that $\Vert \nabla v\Vert _{2}<%
\overline{t}_{\mu }$.

Let $w=\frac{v}{a}$, then $\Vert w\Vert _{2}=1$. If $\Vert \nabla w\Vert
_{2}<\overline{t}_{\mu }$, then it is easy to obtain that $J_{\mu }(v)\geq
\alpha _{\mu }$. Otherwise, there exists $T\in \left( 1,\frac{1}{a}\right) $
such that $\Vert \nabla (Tv)\Vert _{2}=\overline{t}_{\mu }$. Then by Lemma %
\ref{T2.4}, we deduce that $J_{\mu }(Tu)>0,$ which contradicts with $J_{\mu
}(Tv)<T^{2}J_{\mu }(v)<0$. Hence, we have $J_{\mu }(v)\geq \alpha _{\mu }$.
Moreover, by \cite[Lemma 2.4]{VY25}, there holds $\lambda <0$. The proof is
complete.
\end{proof}

\begin{remark}
\label{T2.7} From Lemma \ref{T2.4}, we know that the size of $\overline{t}%
_{\mu }$ is decided by $\Vert V_{1}^{-}\Vert _{N/2}$ and $\Vert
V_{2}^{-}\Vert _{\infty }$. Moreover, we point out that $C_{1},C_{2}$ and $%
C_{3}$ given in Lemma \ref{T2.6} do exist. For instance, when $f\left(
\mathcal{S}^{\frac{N}{4}}\mu ^{-\frac{N-2}{4}}\right) \geq 0$, we can choose
appropriate constants $C_{1},C_{2},C_{3}>0$ such that $\Vert \nabla u\Vert
_{2}<\mathcal{S}^{\frac{N}{4}}\mu ^{-\frac{N-2}{4}}$. When $f\left( \mathcal{%
S}^{\frac{N}{4}}\mu ^{-\frac{N-2}{4}}\right) <0$, by (\ref{e16}), we can
choose appropriate constants $C_{1},C_{2},C_{3}>0$ such that $\Vert \nabla
u\Vert _{2}<t_{\ast }$. So the result of Lemma \ref{T2.6} can be extended to
that of \cite[Lemma 4.3]{VY25}.
\end{remark}

\textbf{We are ready to prove Theorem \ref{T1.2}:} Let $\{v_{n}\}$ be a
minimizing sequence for $\alpha _{\mu }$. Similar to the arguments of \cite[%
Lemmas 4.1 and 4.2]{VY25}, by using Lemmas \ref{T2.3} and \ref{T2.5}, we
conclude that there exists $\bar{v}\in \mathcal{M}$ such that up to a
subsequence, $v_{n}\rightarrow \bar{v}$ in $H^{1}(\mathbb{R}^{N})$ and $%
J_{\mu }\left( \bar{v}\right) =\alpha _{\mu }<0$. Furthermore, it follows
from Lemma \ref{T2.6} that $\bar{v}$ is a ground state.

\section{The existence of mountain-pass solution}

In this paper, we first introduce the minimax class
\begin{equation*}
\Gamma :=\left\{ \gamma \in C\left( [0,1],\mathcal{M}\right) ,\ \gamma (0)=%
\bar{v},\ \gamma (1)>\overline{t}_{\mu }\ \text{and}\ J_{\mu }\left( \gamma
(1)\right) <\alpha _{\mu }\right\} ,
\end{equation*}%
where $\bar{v}$ is the ground state solution of problem (\ref{e11})
satisfying $J_{\mu }(\bar{v})=\alpha _{\mu }$ and $\overline{t}_{\mu }$ is
as in (\ref{e12}). The family $\Gamma $ is not empty. Indeed, let $\bar{v}%
_{t}=t^{N/2}\bar{v}(tx)$ for $t>1$. By Lemmas \ref{T2.2} and \ref{T2.4},
there exists $t_{0},t_{1}>1$ such that
\begin{equation*}
\Vert \nabla \bar{v}_{t_{0}}\Vert _{2}<\overline{t}_{\mu }<\Vert \nabla \bar{%
v}_{t_{1}}\Vert _{2}\text{ and }J_{\mu }\left( \bar{v}_{t_{1}}\right)
<\alpha _{\mu }<0<J_{\mu }\left( \bar{v}_{t_{0}}\right) .
\end{equation*}%
So $\gamma _{\bar{v}}:T\in \lbrack 0,1]\mapsto \bar{v}_{1+T(t_{1}-1)}$ is a
path in $\Gamma $. Define the minimax value
\begin{equation*}
\theta _{\mu }=\inf\limits_{\gamma \in \Gamma }\sup\limits_{T\in \lbrack
0,1]}J_{\mu }\left( \gamma (T)\right) .
\end{equation*}%
Clearly, it holds
\begin{equation*}
\theta _{\mu }\geq \beta _{\mu }>0,
\end{equation*}%
where $\beta _{\mu }$ is given in Lemma \ref{T2.4}.

Now we estimate the minimax value $\theta _{\mu }$. Let us introduce the
function $U_{\varepsilon }\in H^{1}(\mathbb{R}^{N})$ defined as
\begin{equation*}
U_{\varepsilon }=\frac{\eta \left[ N(N-2)\varepsilon ^{2}\right] ^{\frac{N-2%
}{4}}}{\left[ \varepsilon ^{2}+|x|^{2}\right] ^{\frac{N-2}{2}}},
\end{equation*}%
where $\eta \in C_{0}^{\infty }(\mathbb{R}^{N})$, $0\leq \eta \leq 1$ is a
smooth cut-off function such that $\eta \equiv 1$ in $B_{R_{\ast }}$, and
$\eta \equiv 0$ in $\mathbb{R}^{N}\setminus B_{3R_{\ast }/2}$. Here, $R_{\ast }$ is as in (\ref{e8}). Then, by
using the estimates in \cite[eqs. (1.13), (1.29)]{BN83}, we obtain that
\begin{equation}
\begin{array}{l}
\Vert \nabla U_{\varepsilon }\Vert _{2}^{2}=\mathcal{S}^{N/2}+O(\varepsilon
^{N-2}), \\
\Vert U_{\varepsilon }\Vert _{2^{\ast }}^{2^{\ast }}=\mathcal{S}%
^{N/2}+O(\varepsilon ^{N}), \\
\Vert U_{\varepsilon }\Vert _{2}^{2}=c_{1}\varepsilon ^{2}+O(\varepsilon
^{N-2}),\ \text{if}\ N\geq 6.%
\end{array}
\label{e22}
\end{equation}%
Moreover, there holds
\begin{equation}
\Vert U_{\varepsilon }\Vert _{q}^{q}=\left\{
\begin{array}{ll}
c_{2}\varepsilon ^{N-\frac{N-2}{2}q}+o\left( \varepsilon ^{N-\frac{N-2}{2}%
q}\right)  & \ \text{for }\frac{N}{N-2}<q<2^{\ast }, \\
c_{3}\varepsilon ^{\frac{N}{2}}|\ln \varepsilon |+o\left( \varepsilon ^{%
\frac{N}{2}}\right)  & \ \text{for }q=\frac{N}{N-2}, \\
c_{4}\varepsilon ^{\frac{N-2}{2}q}+o\left( \varepsilon ^{\frac{N-2}{2}%
q}\right)  & \ \text{for }1\leq q<\frac{N}{N-2}.%
\end{array}%
\right.   \label{e23}
\end{equation}%
Next, we present some known results, which are crucial to estimate the value
of $\theta _{\mu }$.

\begin{lemma}
\label{T3.1} (\cite[Lemma 5.5]{SZ24}) For any $0<\phi \in L_{loc}^{\infty
}(B_{2R_{\ast }})$, we have as $\varepsilon \rightarrow 0^{+}$,
\begin{equation}
\begin{array}{l}
\int_{B_{2R_{\ast }}}\phi U_{\varepsilon }dx\leq 2[N(N-2)]^{\frac{N-2}{2}%
}\sup\limits_{x\in B_{2R_{\ast }}}\phi w_{N}R_{\ast }^{2}\varepsilon ^{\frac{%
N-2}{2}}+o\left( \varepsilon ^{\frac{N-2}{2}}\right) , \\
\int_{B_{2R_{\ast }}}\phi U_{\varepsilon }^{2^{\ast }-1}dx\geq \frac{1}{4}%
[N(N-2)]^{\frac{N+2}{2}}\inf\limits_{x\in B_{2R_{\ast }}}\phi
w_{N}\varepsilon ^{\frac{N-2}{2}}+o\left( \varepsilon ^{\frac{N-2}{2}%
}\right) .%
\end{array}
\label{e24}
\end{equation}
\end{lemma}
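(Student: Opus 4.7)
The two inequalities are complementary asymptotic estimates, and in each case I would reduce the problem to a one-dimensional radial integral by bounding $\phi$ by its supremum or infimum over the relevant region, then exploiting the standard scaling of the Talenti bubble.

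For the upper bound I would use $\phi(x) \le \sup_{B_{2R_{\ast}}} \phi$ and $\eta \le 1$ pointwise to reduce to
\begin{equation*}
\int_{B_{2R_{\ast}}} \phi\, U_{\varepsilon}\, dx \le \sup_{B_{2R_{\ast}}} \phi \cdot [N(N-2)\varepsilon^{2}]^{(N-2)/4} \int_{B_{2R_{\ast}}} \frac{dx}{(\varepsilon^{2}+|x|^{2})^{(N-2)/2}}.
\end{equation*}
Passing to polar coordinates and scaling $r = \varepsilon s$, the radial integral becomes $w_{N}\,\varepsilon^{2} \int_{0}^{2R_{\ast}/\varepsilon} s^{N-1}(1+s^{2})^{-(N-2)/2}\, ds$. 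Since the integrand grows like $s$ as $s \to \infty$, the dominant contribution comes from the upper limit and is asymptotic to $\tfrac{1}{2} w_{N} (2R_{\ast})^{2} = 2 w_{N} R_{\ast}^{2}$. Assembling the constants produces the stated bound up to the remainder $o(\varepsilon^{(N-2)/2})$.

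For the lower bound I would instead shrink the domain of integration to $B_{R_{\ast}}$, on which $\eta \equiv 1$, and use $\phi \ge \inf_{B_{R_{\ast}}} \phi > 0$ (finite and strictly positive because $\phi \in L^{\infty}_{\mathrm{loc}}$ with $\phi > 0$). This gives
\begin{equation*}
\int_{B_{2R_{\ast}}} \phi\, U_{\varepsilon}^{2^{\ast}-1}\, dx \ge \inf_{B_{R_{\ast}}} \phi \cdot [N(N-2)\varepsilon^{2}]^{(N+2)/4} \int_{B_{R_{\ast}}} \frac{dx}{(\varepsilon^{2}+|x|^{2})^{(N+2)/2}}.
\end{equation*}
The same scaling $r = \varepsilon s$ turns the radial integral into $w_{N}\varepsilon^{-2} \int_{0}^{R_{\ast}/\varepsilon} s^{N-1}(1+s^{2})^{-(N+2)/2}\, ds$; this time the integrand is integrable on $[0,\infty)$, so as $\varepsilon \to 0^{+}$ the upper limit goes to infinity and the integral converges to a finite constant (equal to $1/N$ via a standard Beta-function identity). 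The product of the powers of $\varepsilon$ collapses to the claimed scaling $\varepsilon^{(N-2)/2}$.

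The main obstacle is recognising the opposite concentration behaviour of the two integrals: the first is dominated by a boundary-layer contribution (which is why the factor $R_{\ast}^{2}$ appears), whereas the second concentrates at the origin (producing the compensating factor $\varepsilon^{-2}$). Once this is identified the remainder of the argument is careful bookkeeping of multiplicative constants, and since each inequality is one-sided any discrepancy in these constants is harmlessly absorbed into the $o(\varepsilon^{(N-2)/2})$ error term.
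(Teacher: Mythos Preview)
The paper does not give its own proof of this lemma; it is simply quoted from \cite[Lemma~5.5]{SZ24}. Your sketch is exactly the standard argument one uses for such Talenti-bubble estimates: bound $\phi$ above or below by its sup/inf, use $\eta\le 1$ (resp.\ $\eta\equiv 1$ on $B_{R_\ast}$), pass to radial coordinates, and rescale $r=\varepsilon s$. Your identification of the two opposite concentration regimes---the integral of $U_\varepsilon$ being dominated by the outer shell (whence the factor $R_\ast^2$), and the integral of $U_\varepsilon^{2^\ast-1}$ concentrating at the origin (whence the finite Beta-type integral)---is correct, as is the value $1/N$ for $\int_0^\infty s^{N-1}(1+s^2)^{-(N+2)/2}\,ds$.

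Two small points of imprecision. First, your closing remark that ``any discrepancy in these constants is harmlessly absorbed into the $o(\varepsilon^{(N-2)/2})$ error term'' is not literally true: a mismatch in the \emph{leading} coefficient of $\varepsilon^{(N-2)/2}$ cannot be pushed into a lower-order remainder. What is true, and what you presumably intend, is that since the inequalities are one-sided it suffices that your computed constant lie on the correct side of the stated one (and in any case, for the application in Lemma~\ref{T3.3} only the existence of \emph{some} positive constants with the indicated dependence on $R_\ast$ is needed). Second, $\phi\in L^\infty_{\mathrm{loc}}$ with $\phi>0$ a.e.\ does not by itself force $\inf_{B_{R_\ast}}\phi>0$; this is an implicit hypothesis, automatically satisfied in the application where $\phi=\bar v$ is a strictly positive continuous solution.
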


\begin{lemma}
\label{T3.2} (\cite[Lemma 5.3]{VY25}) Assume that $V\in L^{p}(\mathbb{R}%
^{N}) $ with $1\leq p<+\infty $, $q\in \mathbb{R}$ and $s>0$. Then there
holds%
\begin{equation*}
\left( \int_{\mathbb{R}^{N}}(s^{q}V\left( x/s\right) -V(x))^{p}dx\right)
^{1/p}\rightarrow 0\ \text{as}\ s\rightarrow 1.
\end{equation*}
\end{lemma}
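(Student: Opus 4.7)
The plan is to reduce the claim to smooth compactly supported data via a standard $L^p$ density argument. First I would observe that the quantity inside the integral should be understood as $|s^q V(x/s)-V(x)|^p$ (otherwise the power is ill-defined for $p$ non-integer), and that by the change of variables $y=x/s$ one has
\begin{equation*}
\|s^{q} V(\cdot /s)\|_{p} = s^{q+N/p}\,\|V\|_{p},
\end{equation*}
so the map $s\mapsto s^qV(\cdot/s)$ is uniformly bounded in $L^p$ for $s$ in a neighborhood of $1$.

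Given $\varepsilon>0$, by density of $C^{\infty}_{c}(\mathbb{R}^{N})$ in $L^{p}(\mathbb{R}^{N})$, I would choose $\phi\in C^{\infty}_{c}(\mathbb{R}^{N})$ with $\|V-\phi\|_{p}<\varepsilon$, and split by the triangle inequality
\begin{equation*}
\|s^{q}V(\cdot/s)-V\|_{p}\leq \|s^{q}V(\cdot/s)-s^{q}\phi(\cdot/s)\|_{p}+\|s^{q}\phi(\cdot/s)-\phi\|_{p}+\|\phi-V\|_{p}.
\end{equation*}
The first term equals $s^{q+N/p}\|V-\phi\|_{p}$, which is bounded by $2\varepsilon$ for $s$ sufficiently close to $1$. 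The third term is at most $\varepsilon$ by construction.

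For the middle term, since $\mathrm{supp}\,\phi\subset B_{R}$ for some $R>0$, the rescaling $s^{q}\phi(x/s)$ is supported in $B_{sR}$, so for $s$ near $1$ both functions are supported in the fixed compact set $B_{2R}$. On $B_{2R}$, uniform continuity of $\phi$ together with $s^{q}\to 1$ forces $s^{q}\phi(x/s)\to \phi(x)$ uniformly, and therefore $\|s^{q}\phi(\cdot/s)-\phi\|_{p}\to 0$ as $s\to 1$. Combining the three estimates gives $\limsup_{s\to 1}\|s^{q}V(\cdot/s)-V\|_{p}\leq 3\varepsilon$, and since $\varepsilon$ is arbitrary, the conclusion follows.

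There is no serious obstacle: the argument is entirely classical strong continuity of dilations in $L^p$. The only mildly delicate point is handling the exponent $q\in\mathbb{R}$ (including negative values), but since $s$ stays in a compact subinterval of $(0,\infty)$ around $1$, the factor $s^{q+N/p}$ remains uniformly bounded, so the reduction to the smooth case works without change.
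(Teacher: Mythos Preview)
Your argument is correct and entirely standard: the density reduction to $C_c^\infty$ combined with the scaling identity $\|s^q V(\cdot/s)\|_p = s^{q+N/p}\|V\|_p$ and uniform convergence on a fixed compact set is exactly the classical proof of strong continuity of the dilation action on $L^p$. Note that the paper does not actually prove this lemma; it simply quotes it from \cite[Lemma 5.3]{VY25}, so there is no in-paper proof to compare against, but your proposal supplies precisely the kind of self-contained justification one would expect for such a citation.
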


We use a new method to estimate the mountain pass level $\theta _{\mu }$.

\begin{lemma}
\label{T3.3} Under the assumptions of Theorem \ref{T1.4}, there holds
\begin{equation*}
\theta _{\mu }<\alpha _{\mu }+\frac{1}{N}\mathcal{S}^{\frac{N}{2}}\mu ^{1-%
\frac{N}{2}}<\frac{1}{N}\mathcal{S}^{\frac{N}{2}}\mu ^{1-\frac{N}{2}},
\end{equation*}%
where $\alpha _{\mu }$ is the ground state energy level of problem (\ref%
{e11}).
\end{lemma}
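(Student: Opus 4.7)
The strategy is to construct an admissible path $\gamma_\varepsilon\in\Gamma$ whose maximum of $J_\mu$ is strictly below $\alpha_\mu+\tfrac{1}{N}\mathcal{S}^{N/2}\mu^{1-N/2}$, by attaching an Aubin--Talenti bubble $U_\varepsilon$ centered at the origin (where (\ref{e8}) forces $V$ to be strictly negative) to the ground state $\bar v$. A natural two-piece template: on $[0,\tfrac12]$ I would take
\begin{equation*}
\gamma_\varepsilon(T)=\frac{\bar v+2TR_0\,U_\varepsilon}{\|\bar v+2TR_0\,U_\varepsilon\|_2},
\end{equation*}
so that $\gamma_\varepsilon(0)=\bar v$; on $[\tfrac12,1]$, I dilate via $v\mapsto v_t$ starting from $\gamma_\varepsilon(\tfrac12)$, with $t$ running up to some $t_1>1$ chosen through Lemma \ref{T2.2} so that $\|\nabla\gamma_\varepsilon(1)\|_2>\overline t_\mu$ and $J_\mu(\gamma_\varepsilon(1))<\alpha_\mu$. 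This guarantees $\gamma_\varepsilon\in\Gamma$.

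The core work is the energy expansion along phase one. Setting $r=2TR_0$ and using Lemma \ref{T3.1} to control the cross interactions $\int\nabla\bar v\cdot\nabla U_\varepsilon\,dx$ and $\int V\bar v U_\varepsilon\,dx$ (each $O(\varepsilon^{(N-2)/2})$), together with (\ref{e22})--(\ref{e23}), I decompose
\begin{equation*}
J_\mu(\gamma_\varepsilon(T))=\alpha_\mu+g(r)+\tfrac{r^2}{2}\!\int\! V U_\varepsilon^2\,dx+O(\varepsilon^{(N-2)/2}),
\end{equation*}
where $g(r)=\tfrac{r^2}{2}\|\nabla U_\varepsilon\|_2^2-\tfrac{\mu r^{2^*}}{2^*}\|U_\varepsilon\|_{2^*}^{2^*}$ attains its supremum $\tfrac{1}{N}\mathcal{S}^{N/2}\mu^{1-N/2}+O(\varepsilon^{N-2})$ at some $r_\varepsilon$ bounded away from $0$ and $\infty$. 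By (\ref{e8}), on $\mathrm{supp}\,U_\varepsilon\subset B_{2R_*}$ one has $V\leq-(\inf_{B_{2R_*}}V_1^--\|V_2^-\|_\infty)=:-\kappa<0$, so using $\|U_\varepsilon\|_2^2=c_1\varepsilon^2+O(\varepsilon^{N-2})$ (valid for $N\geq 6$),
\begin{equation*}
\tfrac{r^2}{2}\!\int\! V U_\varepsilon^2\,dx\leq-\tfrac{c_1\kappa}{2}\,r^2\,\varepsilon^2+o(\varepsilon^2).
\end{equation*}
Choosing $R_0>r_\varepsilon$ large enough that $\gamma_\varepsilon(\tfrac12)$ already sits past the peak of $t\mapsto J_\mu\bigl((\gamma_\varepsilon(\tfrac12))_t\bigr)$ renders $J_\mu$ monotone along phase two, so the first inequality of the lemma follows by collecting estimates; the second is immediate from $\alpha_\mu<0$ (Lemma \ref{T2.5}).

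\textbf{Main obstacle.} The sharp case is $N=6$: the cross terms $O(\varepsilon^{(N-2)/2})=O(\varepsilon^2)$ live at the same scale as the potential gain $-c\varepsilon^2$. To secure the \emph{strict} inequality one must carefully track leading-order constants, exploiting that the $L^{2^*}$-cross terms contribute \emph{negatively} to $J_\mu$ (since $\bar v,U_\varepsilon\geq 0$) and using (\ref{e8}) quantitatively so that $\kappa$ is large enough to absorb the residual positive cross terms. For $N\geq 7$ the error is $o(\varepsilon^2)$ and the argument is clean. A minor technical point is handling the $L^2$-renormalization factor $\|\bar v+rU_\varepsilon\|_2$ and the mixed interaction $\int V\bar v U_\varepsilon\,dx$ at this scale, for which Lemma \ref{T3.2} and (\ref{e3}) are enough.
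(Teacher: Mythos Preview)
Your overall architecture---glue an Aubin--Talenti bubble to $\bar v$, normalize back to $\mathcal{M}$, then dilate---matches the paper, and you correctly isolate $N=6$ as the critical case. However, there is a genuine gap in your treatment of that case, and a structural difference in the normalization step worth noting.

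\textbf{The gap at $N=6$.} Your proposed remedy is to take ``$\kappa$ large enough to absorb the residual positive cross terms,'' but $\kappa=\inf_{B_{2R_\ast}}V_1^{-}-\|V_2^-\|_\infty$ is a fixed quantity determined by the given potential; you cannot enlarge it. The paper resolves this differently: rather than bounding the $\varepsilon^{(N-2)/2}$ contributions by an anonymous $O(\cdot)$, it tracks their leading constants via Lemma~\ref{T3.1}. The negative $L^{2^\ast}$--cross term you allude to, namely $-\mu t^{2^\ast-1}\int \bar v\,U_\varepsilon^{2^\ast-1}$ (coming from the expansion $(1+b)^{2^\ast}\ge 1+2^\ast b^{2^\ast-1}+b^{2^\ast}$), contributes $-c_6\,\varepsilon^{(N-2)/2}$ with $c_6$ \emph{independent of $R_\ast$}, whereas all positive cross terms ($\int \bar v\,U_\varepsilon$, $\int \bar v^{2^\ast-1}U_\varepsilon$, and the normalization correction) are bounded above by $c_5 R_\ast^{2}\,\varepsilon^{(N-2)/2}$. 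Since condition~(\ref{e8}) is preserved when $R_\ast$ is shrunk (the infimum over a smaller ball can only increase), the paper takes $R_\ast$ small enough that $c_5 R_\ast^{2}-c_6<0$. This is the mechanism that closes the $N=6$ case; simply invoking a large $\kappa$ does not.

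\textbf{Normalization: division versus dilation.} You project onto $\mathcal{M}$ by dividing by $\|\bar v+rU_\varepsilon\|_2$; the paper instead takes $\psi_{\varepsilon,t}=(1-t^2\|U_\varepsilon\|_2^2)^{1/2}\bar v+tU_\varepsilon$ and then applies the $D^{1,2}$--invariant dilation $\Psi_{\varepsilon,t}=\xi^{(N-2)/2}\psi_{\varepsilon,t}(\xi\cdot)$ with $\xi=\|\psi_{\varepsilon,t}\|_2$. The advantage of the paper's choice is that $\|\nabla\cdot\|_2$ and $\|\cdot\|_{2^\ast}$ are unchanged by the dilation, so the only new term is the potential shift $\int\!\bigl(V(x/\xi)-\xi^{2}V(x)\bigr)|\psi_{\varepsilon,t}|^2$, controlled via Lemma~\ref{T3.2} and the identity~(\ref{e31})--(\ref{e35}) to give a bound $\le C t\int \bar v\,U_\varepsilon$, again of size $O(R_\ast^{2}\varepsilon^{(N-2)/2})$. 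Your $L^2$--division instead spawns $1/\rho^{2}$ and $1/\rho^{2^\ast}$ factors multiplying every term; in particular it produces a positive correction $-\tfrac{\delta}{2}\lambda_\mu\sim |\lambda_\mu|\,\varepsilon^{2}$ (since $\lambda_\mu<0$ and $\delta$ contains $r^2\|U_\varepsilon\|_2^2$) that competes directly with your potential gain at \emph{every} $N\ge 6$, not just $N=6$. This is workable but requires the same explicit bookkeeping as above, not a generic $O$--estimate.
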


\begin{proof}
The proof is divided into two steps.

\textbf{Step 1: We find a path $\gamma $ such that $\gamma \in \Gamma $. }%
From (\ref{e22}), it follows that $\Vert U_{\varepsilon }\Vert
_{2}^{2}\rightarrow 0$ as $\varepsilon \rightarrow 0$. Then for sufficiently
small $\varepsilon >0$, there holds $1-t^{2}\Vert U_{\varepsilon }\Vert
_{2}^{2}\geq 0$ for$\ t>0.$ Moreover, for sufficiently small $\varepsilon >0$%
, we define
\begin{equation*}
\psi _{\varepsilon ,t}=\left( 1-t^{2}\Vert U_{\varepsilon }\Vert
_{2}^{2}\right) ^{1/2}\bar{v}+tU_{\varepsilon }\ \text{for}\ t>0,
\end{equation*}%
where $\bar{v}$ is the ground state solution of problem (\ref{e11}) with $%
\lambda =\lambda _{\mu }$ such that $J_{\mu }(\bar{v})=\alpha _{\mu }$. It
is obvious that $\psi _{\varepsilon ,t}>0$ a.e. in $\mathbb{R}^{N}$. Set%
\begin{equation*}
\Psi _{\varepsilon ,t}=\xi ^{\frac{N-2}{2}}\psi _{\varepsilon ,t}(\xi x),
\end{equation*}%
where $\xi :=\Vert \psi _{\varepsilon ,t}\Vert _{2}>0$. A direct calculation
shows that $\Psi _{\varepsilon ,t}\in \mathcal{M}$ and
\begin{equation*}
\Vert \nabla \Psi _{\varepsilon ,t}\Vert _{2}^{2}=\Vert \nabla \psi
_{\varepsilon ,t}\Vert _{2}^{2},\ \Vert \Psi _{\varepsilon ,t}\Vert
_{2^{\ast }}^{2^{\ast }}=\Vert \psi _{\varepsilon ,t}\Vert _{2^{\ast
}}^{2^{\ast }},\ \Vert \Psi _{\varepsilon ,t}\Vert _{2}^{2}=\xi ^{-2}\Vert
\psi _{\varepsilon ,t}\Vert _{2}^{2}.
\end{equation*}%
Let $(\Psi _{\varepsilon ,t})_{s}=s^{N/2}\Psi _{\varepsilon ,t}(sx)$ for $%
s\geq 1$ and define the function
\begin{eqnarray*}
H_{t}(s) &=&J_{\mu }\left( \left( \Psi _{\varepsilon ,t}\right) _{s}\right)
\\
&=&\frac{s^{2}}{2}\Vert \nabla \Psi _{\varepsilon ,t}\Vert _{2}^{2}+\frac{%
s^{N}}{2}\int_{\mathbb{R}^{N}}V(x)|\Psi _{\varepsilon ,t}(sx)|^{2}dx-\frac{%
\mu s^{2^{\ast }}}{2^{\ast }}\Vert \Psi _{\varepsilon ,t}\Vert _{2^{\ast
}}^{2^{\ast }}\text{ for }s\geq 1.
\end{eqnarray*}%
Using Lemma \ref{T2.1}, (\ref{e22})--(\ref{e24}) and Minkowski inequality, one has
\begin{eqnarray*}
H_{t}(s) &=&\frac{s^{2}}{2}\Vert \nabla \bar{v}\Vert _{2}^{2}+\frac{s^{N}}{2}%
\int_{\mathbb{R}^{N}}V(x)|\Psi _{\varepsilon ,t}(sx)|^{2}dx-\frac{\mu s^{2^{\ast }}}{%
2^{\ast }}\Vert \bar{v}\Vert _{2^{\ast }}^{2^{\ast }}+\frac{s^{2}t^{2}}{2}\mathcal{S}^{\frac{N}{2}}-\frac{\mu s^{2^{\ast
}}t^{2^{\ast }}}{2^{\ast }}\mathcal{S}^{\frac{N}{2}}+o_{\varepsilon }(1)\\
&\leq&\frac{s^{2}}{2}\Vert \nabla \bar{v}\Vert _{2}^{2}-\frac{\mu s^{2^{\ast }}}{%
2^{\ast }}\Vert \bar{v}\Vert _{2^{\ast }}^{2^{\ast }}+\frac{s^{2}t^{2}}{2}\mathcal{S}^{\frac{N}{2}}-\frac{\mu s^{2^{\ast
}}t^{2^{\ast }}}{2^{\ast }}\mathcal{S}^{\frac{N}{2}}+s^2\Vert V_{1}\Vert _{N/2}\left(\Vert \bar{v}\Vert _{2^{\ast }}^{2}+
t^{2}\mathcal{S}^{\frac{N-2}{2}}\right)
\\
&&+\frac{1}{2}\Vert V_{2}\Vert _{\infty }+o_{\varepsilon }(1)
\end{eqnarray*}%
and
\begin{eqnarray*}
H_{t}^{\prime }(s) &=&s\Vert \nabla \bar{v}\Vert _{2}^{2}-\mu s^{2^{\ast
}-1}\Vert \bar{v}\Vert _{2^{\ast }}^{2^{\ast }}+\frac{N}{2}s^{N-1}\int_{%
\mathbb{R}^{N}}V(x)|\Psi _{\varepsilon ,t}(sx)|^{2}dx \\
&&+\frac{s^{N}}{2}\int_{\mathbb{R}^{N}}V(x)\Psi _{\varepsilon ,t}(sx)\nabla \Psi _{\varepsilon ,t}(sx)\cdot xdx+st^{2}\mathcal{S}^{\frac{N}{2}}-\mu s^{2^{\ast }-1}t^{2^{\ast
}}\mathcal{S}^{\frac{N}{2}}+o_{\varepsilon }(1) \\
&\leq &s\Vert \nabla \bar{v}\Vert _{2}^{2}-\mu s^{2^{\ast }-1}\Vert \bar{v}%
\Vert _{2^{\ast }}^{2^{\ast }}+st^{2}\mathcal{S}^{\frac{N}{2}}-\mu
s^{2^{\ast }-1}t^{2^{\ast }}\mathcal{S}^{\frac{N}{2}} +sN\Vert V_{1}\Vert _{N/2}\left(\Vert \bar{v}\Vert _{2^{\ast }}^{2}+
t^{2}\mathcal{S}^{\frac{N-2}{2}}\right)\\
&&+
\frac{N}{2s}\Vert V_{2}\Vert _{\infty }+\left[\frac{s}{2}\Vert W_{1}\Vert _{N}\left(\Vert \bar{v}\Vert _{2^{\ast }}^{2}+
t^{2}\mathcal{S}^{\frac{N-2}{2}}\right)+\frac{1}{2}\Vert W_{2}\Vert _{\infty }\right]\left(\Vert
\nabla \bar{v}\Vert _{2}+t\mathcal{S}^{\frac{N}{4}}\right)+o_{\varepsilon }(1).
\end{eqnarray*}%
From the above inequalities, we can choose $\overline{t}>0$ large enough and $\varepsilon >0$ small enough such
that $1-\overline{t}\Vert U_{\varepsilon }\Vert _{2}^{2}>0$ and $H_{%
\overline{t}}^{\prime }(s),H_{\overline{t}}(s)<0$ for $s\geq 1$. This shows
that
\begin{equation}
J_{\mu }\left( \left( \Psi _{\varepsilon ,\overline{t}}\right) _{s}\right)
\leq J_{\mu }\left( \Psi _{\varepsilon ,\overline{t}}\right) <0\text{ for}\
s\geq 1.  \label{e25}
\end{equation}%
Moreover, by Lemma \ref{T2.2} there exists $s_{0}>0$ such that
\begin{equation}
J_{\mu }\left( \left( \Psi _{\varepsilon ,\overline{t}}\right)
_{s_{0}+1}\right) <\alpha _{\mu }<0.  \label{e26}
\end{equation}%
Define%
\begin{equation*}
\gamma (T)=\left\{
\begin{array}{ll}
\Psi _{\varepsilon ,2T\overline{t}} & \text{ for }T\in \lbrack 0,1/2], \\
\left( \Psi _{\varepsilon ,\overline{t}}\right) _{2(T-1/2)s_{0}+1} & \text{
for }T\in \lbrack 1/2,1].%
\end{array}%
\right.
\end{equation*}%
Thus, it follows
from (\ref{e25})--(\ref{e26}) that $\gamma \in \Gamma $. This indicates
\begin{equation}
\theta _{\mu }\leq \sup_{t\in \lbrack 0,\overline{t}]}J_{\mu }(\Psi
_{\varepsilon ,t}).  \label{e27}
\end{equation}

\textbf{Step 2: We claim $\sup\limits_{t\in \lbrack 0,\overline{t}]}J_{\mu
}(\Psi _{\varepsilon ,t})<\alpha _{\mu }+\frac{1}{N}\mathcal{S}^{\frac{N}{2}%
}\mu ^{1-\frac{N}{2}}.$ } For convenience, let $\sigma =\left( 1-t^{2}\Vert
U_{\varepsilon }\Vert _{2}^{2}\right) ^{1/2}$. A direct calculation gives
that
\begin{eqnarray}
J_{\mu }(\Psi _{\varepsilon ,t}) &=&\frac{1}{2}\Vert \nabla \psi
_{\varepsilon ,t}\Vert _{2}^{2}+\frac{\xi ^{-2}}{2}\int_{\mathbb{R}%
^{N}}V(x/\xi )|\psi _{\varepsilon ,t}|^{2}dx-\frac{\mu }{2^{\ast }}\Vert
\psi _{\varepsilon ,t}\Vert _{2^{\ast }}^{2^{\ast }}  \notag \\
&=&J_{\mu }(\psi _{\varepsilon ,t})+\frac{\xi ^{-2}}{2}\int_{\mathbb{R}%
^{N}}\left( V(x/\xi )-\xi ^{2}V(x)|\psi _{\varepsilon ,t}|^{2}\right) dx
\notag \\
&=&J_{\mu }(\sigma \bar{v})+\frac{t^{2}}{2}\int_{\mathbb{R}^{N}}|\nabla
U_{\varepsilon }|^{2}dx+\frac{t^{2}}{2}\int_{\mathbb{R}^{N}}V(x)|U_{%
\varepsilon }|^{2}dx+t\sigma \int_{\mathbb{R}^{N}}V(x)\bar{v}U_{\varepsilon
}dx  \notag \\
&&+t\sigma \int_{\mathbb{R}^{N}}\nabla \bar{v}\nabla U_{\varepsilon }dx+%
\frac{\mu }{2^{\ast }}\int_{\mathbb{R}^{N}}\left( |\sigma \bar{v}|^{2^{\ast
}}-|\sigma \bar{v}+tU_{\varepsilon }|^{2^{\ast }}\right) dx  \notag \\
&&+\frac{\xi ^{-2}}{2}\int_{\mathbb{R}^{N}}\left( V(x/\xi )-\xi
^{2}V(x)\right) |\psi _{\varepsilon ,t}|^{2}dx.  \label{e28}
\end{eqnarray}%
Note that $(1+b)^{a}\geq 1+ab^{a-1}+b^{a}$ for any $a\geq 2,b>0$. Then there
holds
\begin{equation}
\frac{1}{2^{\ast }}\int_{\mathbb{R}^{N}}\left( |\sigma \bar{v}|^{2^{\ast
}}-|\sigma \bar{v}+tU_{\varepsilon }|^{2^{\ast }}\right) dx\leq -\frac{%
t^{2^{\ast }}}{2^{\ast }}\int_{\mathbb{R}^{N}}|U_{\varepsilon }|^{2^{\ast
}}dx-t^{2^{\ast }-1}\sigma \int_{\mathbb{R}^{N}}\bar{v}U_{\varepsilon
}^{2^{\ast }-1}dx.  \label{e29}
\end{equation}%
Considering $\xi =\Vert \psi _{\varepsilon ,t}\Vert _{2}$, we have
\begin{equation}
\xi ^{2}=1+2t\sigma \int_{\mathbb{R}^{N}}\bar{v}U_{\varepsilon }dx,
\label{e30}
\end{equation}%
which implies that $\xi \geq 1$. A direct calculation shows that
\begin{eqnarray}
&&\xi^{-2}\int_{\mathbb{R}^{N}}\left( V(x/\xi )-\xi ^{2}V(x)\right) |\psi
_{\varepsilon ,t}|^{2}dx  \notag \\
&=&\int_{\mathbb{R}^{N}}V(x)\left[ \xi^{N-2}\psi _{\varepsilon ,t}^{2}(\xi x)-\psi
_{\varepsilon ,t}^{2}(x)\right] dx  \notag \\
&=&\int_{\mathbb{R}^{N}}V(x)\int_{1}^{\xi }\left( \frac{N-2}{2}z^{N-3}\psi
_{\varepsilon ,t}^{2}(zx)+z^{N-2}\psi _{\varepsilon ,t}(zx)\nabla \psi
_{\varepsilon ,t}(zx)\cdot x\right) dzdx  \notag \\
&=&\int_{1}^{\xi }\int_{\mathbb{R}^{N}}z^{-3}V\left( x/z\right) \left( \frac{%
N-2}{2}\psi _{\varepsilon ,t}^{2}(x)+\psi _{\varepsilon ,t}(x)\nabla \psi
_{\varepsilon ,t}(x)\cdot x\right) dxdz.  \label{e31}
\end{eqnarray}%
Define
\begin{equation*}
C_{\mu }=\int_{\mathbb{R}^{N}}V(x)\left( \frac{N-2}{2}\psi _{\varepsilon
,t}^{2}(x)+\psi _{\varepsilon ,t}(x)\nabla \psi _{\varepsilon ,t}(x)\cdot
x\right) dx.
\end{equation*}%
Clearly, $|C_{\mu }|$ is bounded. Then by (\ref{e31}), one has
\begin{eqnarray}
&&\xi^{-2}\int_{\mathbb{R}^{N}}\left( V(x/\xi )-\xi ^{2}V(x)\right) |\psi
_{\varepsilon ,t}|^{2}dx  \notag \\
&=&\int_{1}^{\xi }\left\{ \int_{\mathbb{R}^{N}}\left( z^{-3}V\left(
x/z\right) -V(x)\right) \left( \frac{N-2}{2}\psi _{\varepsilon
,t}^{2}(x)+\psi _{\varepsilon ,t}(x)\nabla \psi _{\varepsilon ,t}(x)\cdot
x\right) dx+C_{\mu }\right\} dz.  \label{e32}
\end{eqnarray}%
According to the definition of $\xi $ and the fact that $1\leq z\leq \xi $,
we conclude that for any sufficiently small $\varepsilon >0$ and for any $%
\ell >0$, there exists $r>0$, independent of $z$, such that
\begin{equation}
\left\vert \int_{\mathbb{R}^{N}\setminus B_{r}(0)}\left( z^{-3}V\left(
x/z\right) -V(x)\right) \left( \frac{N-2}{2}z^{N-2}\psi _{\varepsilon
,t}^{2}(x)+\psi _{\varepsilon ,t}(x)\nabla \psi _{\varepsilon ,t}(x)\cdot
x\right) dx\right\vert <\ell .  \label{e33}
\end{equation}%
Since $V(x)=V_{1}+V_{2}\in L^{N/2}(\mathbb{R}^{N})+L^{\infty }(\mathbb{R}%
^{N})$, it follows from \cite[Lemma 2.7]{VY25} that $\overline{V}=V\cdot \chi
_{B_{r}}\in L^{N/2}(\mathbb{R}^{N})$ and $\overline{W}=W\cdot \chi
_{B_{r}}\in L^{N}(\mathbb{R}^{N})$. Then by Lemma \ref{T3.2} one has
\begin{equation*}
\Vert z^{-3}\overline{V}\left( x/z\right) -\overline{V}(x)\Vert
_{N/2}\rightarrow 0\ \text{and}\ \Vert z^{-3}\overline{W}\left( x/z\right) -%
\overline{W}\Vert _{N/2}\rightarrow 0,
\end{equation*}%
as $z\rightarrow 1$, which implies that
\begin{eqnarray}
&&\left\vert \int_{B_{r}}\left( z^{-3}V\left( x/z\right) -V(x)\right) \left(
\frac{N-2}{2}\psi _{\varepsilon ,t}^{2}(x)+\psi _{\varepsilon ,t}(x)\nabla
\psi _{\varepsilon ,t}(x)\cdot x\right) dx\right\vert   \notag \\
&\leq &\frac{N-2}{2}\Vert z^{-3}\overline{V}\left( x/z\right) -\overline{V}%
(x)\Vert _{L^{N/2}\left( B_{r}\right) }\Vert \psi _{\varepsilon ,t}\Vert
_{2^{\ast }}^{2}  \notag \\
&&+\Vert z^{-2}\overline{W}\left( x/z\right) -\overline{W}\Vert
_{L^{N}\left( B_{r}\right) }\Vert \psi _{\varepsilon ,t}\Vert _{2^{\ast
}}\Vert \nabla \psi _{\varepsilon ,t}\Vert _{2}  \notag \\
&\leq &2\ell .  \label{e34}
\end{eqnarray}%
Thus it follows from (\ref{e30}), (\ref{e32})--(\ref{e34}) that
\begin{equation}
\xi^{-2}\int_{\mathbb{R}^{N}}\left( V(x/\xi )-\xi ^{2}V(x)\right) |\psi
_{\varepsilon ,t}|^{2}dx\leq \int_{1}^{\xi}\left( 3\ell +|C_{\mu }|\right)
dz\leq Ct\int_{\mathbb{R}^{N}}vU_{\varepsilon }dx.  \label{e35}
\end{equation}%
Using the fact that
\begin{equation*}
(a+b)^{p}=a^{p}+pa^{p-1}b+O(b^{p})\text{ for }1<p\leq 2\text{ and }a,b>0,
\end{equation*}%
we calculate that
\begin{equation*}
\left( 1-t^{2}\Vert U_{\varepsilon }\Vert _{2}^{2}\right) ^{2^{\ast
}/2}=
1-\frac{2^{\ast }t^{2}}{2}\Vert U_{\varepsilon }\Vert _{2}^{2}+o(\Vert
U_{\varepsilon }\Vert _{2}^{2})\text{ } \text{for}\ N\geq 6.
\end{equation*}%
Using this, together with (\ref{e16}), leads to
\begin{eqnarray}
J_{\mu }(\sigma \bar{v}) &=&J_{\mu }(\bar{v})-\frac{t^{2}}{2}\Vert
U_{\varepsilon }\Vert _{2}^{2}\left( \Vert \nabla \bar{v}\Vert _{2}^{2}-\mu
\Vert \bar{v}\Vert _{2^{\ast }}^{2^{\ast }}+\int_{\mathbb{R}^{N}}V(x)\bar{v}%
^{2}dx\right)   \notag \\
&&+\frac{\mu }{2^{\ast }}\left( 1-\frac{2^{\ast }}{2}t^{2}\Vert
U_{\varepsilon }\Vert _{2}^{2}-\left( 1-t^{2}\Vert U_{\varepsilon }\Vert
_{2}^{2}\right) ^{\frac{2^{\ast }}{2}}\right) \Vert \bar{v}\Vert _{2^{\ast
}}^{2^{\ast }}  \notag \\
&\leq &\alpha _{\mu }-\frac{t^{2}}{2}\Vert
U_{\varepsilon }\Vert _{2}^{2}\left[\left(1-\Vert V_{1}^{-}\Vert_{\frac{N}{2}}\mathcal{S}^{-1}\right)\Vert\nabla \bar{v}\Vert _{2}^{2}%
-\mu \mathcal{S}^{\frac{-N}{N-2}}\Vert\nabla \bar{v}\Vert _{2}^{2^{\ast}}-\Vert V_{2}^{-}\Vert_{\infty}\right]+o(\Vert U_{\varepsilon }\Vert _{2}^{2}) \notag\\
&\leq &\alpha _{\mu }+\frac{t^{2}}{2}\Vert V_{2}^{-}\Vert _{\infty }\Vert
U_{\varepsilon }\Vert _{2}^{2}+o(\Vert U_{\varepsilon }\Vert _{2}^{2}).
\label{e36}
\end{eqnarray}%
Since $\bar{v}$ is the ground state solution of problem (\ref{e11}) with $%
\lambda =\lambda _{\mu }$ such that $J_{\mu }(\bar{v})=\alpha _{\mu }$, and
$\lambda _{\mu }<0$ by Lemma \ref{T2.6}, we have
\begin{equation}
\int_{\mathbb{R}^{N}}\nabla \bar{v}\nabla U_{\varepsilon }dx-\mu \int_{%
\mathbb{R}^{N}}\bar{v}^{2^{\ast }-1}U_{\varepsilon }dx+\int_{\mathbb{R}%
^{N}}V(x)\bar{v}U_{\varepsilon }dx=\lambda _{\mu }\int_{\mathbb{R}^{N}}\bar{v%
}U_{\varepsilon }dx<0.  \label{e37}
\end{equation}%
By (\ref{e36})--(\ref{e37}), we have
\begin{eqnarray}
&&J_{\mu }(\sigma \bar{v})+t\sigma \int_{\mathbb{R}^{N}}V(x)\bar{v}%
U_{\varepsilon }dx+t\sigma \int_{\mathbb{R}^{N}}\nabla \bar{v}\nabla
U_{\varepsilon }dx  \notag \\
&\leq &\alpha _{\mu }+\frac{t^{2}}{2}\Vert V_{2}^{-}\Vert _{\infty }\Vert
U_{\varepsilon }\Vert _{2}^{2}+\mu \int_{\mathbb{R}^{N}}\bar{v}^{2^{\ast
}-1}U_{\varepsilon }dx+o(\Vert U_{\varepsilon }\Vert _{2}^{2}).
\label{e38}
\end{eqnarray}%
It follows from (\ref{e22})--(\ref{e24}), (\ref{e27})--(\ref{e30}), (\ref{e35})
and (\ref{e38}) that
\begin{eqnarray}
\theta _{\mu } &\leq &\sup_{t\in \lbrack 0,\widetilde{t}]}J_{\mu }(\Psi
_{\varepsilon ,t})  \notag \\
&\leq &\sup_{t\in \lbrack 0,\widetilde{t}]}\left( \frac{t^{2}}{2}\mathcal{S}%
^{\frac{N}{2}}-\frac{\mu t^{2^{\ast }}}{2^{\ast }}\mathcal{S}^{\frac{N}{2}}+%
\frac{t^{2}}{2}\Vert V_{2}^{-}\Vert _{\infty }\Vert U_{\varepsilon }\Vert
_{2}^{2}+\frac{t^{2}}{2}\int_{\mathbb{R}^{N}}V(x)U_{\varepsilon
}^{2}dx\right)\\
&&+(c_5R_{\ast}^{2}-c_6)\varepsilon^{\frac{N-2}{2}}  +\alpha _{\mu }+o(\Vert
U_{\varepsilon }\Vert _{2}^{2}).  \label{e39}
\end{eqnarray}%
Let%
\begin{equation*}
G(t)=\frac{t^{2}}{2}\mathcal{S}^{\frac{N}{2}}-\frac{\mu t^{2^{\ast }}}{%
2^{\ast }}\mathcal{S}^{\frac{N}{2}}+\frac{t^{2}}{2}\Vert V_{2}^{-}\Vert
_{\infty }\Vert U_{\varepsilon }\Vert _{2}^{2}+\frac{t^{2}}{2}\int_{\mathbb{R%
}^{N}}V(x)U_{\varepsilon }^{2}dx\text{ for }0<t<\overline{t}.
\end{equation*}%
By (\ref{e8}), we deduce that for $t>0$,
\begin{equation*}
G(t)<\frac{t^{2}}{2}\mathcal{S}^{\frac{N}{2}}-\frac{\mu t^{2^{\ast }}}{%
2^{\ast }}\mathcal{S}^{\frac{N}{2}}-Ct^{2}\Vert U_{\varepsilon }\Vert
_{2}^{2}\ \text{for}\ N\geq 6,
\end{equation*}%
which implies that
\begin{equation}
\sup\limits_{t\in \lbrack 0,\bar{t}]}G(t)<\frac{1}{N}\mathcal{S}^{%
\frac{N}{2}}\mu ^{1-\frac{N}{2}}-C\Vert U_{\varepsilon }\Vert _{2}^{2}\text{
for}\ N\geq 6.  \label{e40}
\end{equation}%
Therefore, by (\ref{e22}) and (\ref{e39})--(\ref{e40}), we conclude
that $\theta _{\mu }<\alpha _{\mu }+\frac{1}{N}\mathcal{S}^{\frac{N}{2}}\mu
^{1-\frac{N}{2}}$, where we have controlled $R_{\ast}>0$ small enough such that $c_5R_{\ast}^{2}-c_6<0$. The proof is complete.
\end{proof}

To establish the existence result, we need the min-max principle developed
by N. Ghoussoub \cite[Theorem 4.1]{G93} to construct a mountain pass type
critical point for $J_{\mu }|_{\mathcal{M}}$.

\begin{lemma}
\label{T3.4} (\cite[Theorem 4.1]{G93}) Let $X$ be a Hilbert manifold and let
$E_{\mu }\in C^{1}(M,\mathbb{R})$ be a given functional. Let $K\subset X$ be
compact and consider a subset
\begin{equation*}
\Gamma \subset \{\gamma \subset X:\gamma \ \text{is compact,}\ K\subset
\gamma \},
\end{equation*}%
which is invariant with respect to deformations leaving $K$ fixed. Assume
that
\begin{equation*}
\max\limits_{v\in K}J_{\mu }(v)<\theta _{\mu }:=\inf\limits_{\gamma \in
\Gamma }\max\limits_{v\in \gamma }J_{\mu }(v).
\end{equation*}%
Let $(\gamma _{n})_{n}\subset \Gamma $ be a sequence such that $%
\max\limits_{v\in \gamma _{n}}J_{\mu }(v)\rightarrow \theta _{\mu }\ $as$\
n\rightarrow +\infty .$ Then there exists a sequence $(v_{n})_{n}\subset X$
such that, as $n\rightarrow +\infty $,
\begin{equation*}
J_{\mu }(v_{n})\rightarrow \theta _{\mu },\ \Vert \nabla _{X}J_{\mu
}(v_{n})\Vert \rightarrow 0\ \text{and }dist(v_{n},\gamma _{n})\rightarrow 0.
\end{equation*}
\end{lemma}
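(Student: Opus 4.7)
The plan is to give a contradiction argument based on a localized pseudo-gradient deformation, which is the standard route to Ghoussoub-type min-max principles. I would not try to invoke the plain mountain-pass deformation lemma, because the conclusion requires the extra information $dist(v_n,\gamma_n)\to 0$, which forces the deformation to be localized near each $\gamma_n$. Since $X$ here is the $L^2$-sphere $\mathcal{M}$, a Hilbert submanifold of $H^1(\mathbb{R}^N)$, all the usual tools (tangent bundle, Riemannian pseudo-gradient, flow of a locally Lipschitz vector field) are available.

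Step one: set up the contradiction. Suppose the thesis fails. Then, passing to a subsequence, there exist $\varepsilon,\delta>0$ and a sequence $(\gamma_n)\subset\Gamma$ with $\max_{\gamma_n}J_\mu\to\theta_\mu$ such that, for every $n$ large and every
\begin{equation*}
v\in A_n:=\bigl\{v\in X:\,|J_\mu(v)-\theta_\mu|\le 2\varepsilon,\ dist(v,\gamma_n)\le 2\delta\bigr\},
\end{equation*}
we have $\|\nabla_X J_\mu(v)\|\ge\delta$. By shrinking $\varepsilon$ I would also arrange $\max_{v\in K}J_\mu(v)<\theta_\mu-2\varepsilon$, so that $K$ is uniformly separated from the critical strip.

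Step two: build a localized pseudo-gradient flow. On the open set $A_n$, take a locally Lipschitz pseudo-gradient vector field $W_n$ with $\|W_n(v)\|\le 2/\delta$ and $\langle\nabla_X J_\mu(v),W_n(v)\rangle\ge 1$, and multiply it by a cut-off $\chi_n\in C^\infty(X,[0,1])$ that is $1$ on $\{|J_\mu-\theta_\mu|\le\varepsilon,\ dist(\cdot,\gamma_n)\le\delta\}$ and $0$ outside $A_n$ (and in particular $0$ near $K$, by the separation above). The negative flow $\eta_n(t,\cdot)$ of $-\chi_n W_n$ is then globally defined on $X$, fixes $K$ pointwise, and keeps each trajectory within the $2\delta$-ball around its start; in particular $\eta_n(t,\gamma_n)$ stays in a $2\delta$-tubular neighborhood of $\gamma_n$.

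Step three: derive the contradiction. Along any orbit entering the strip $\{|J_\mu-\theta_\mu|\le\varepsilon\}$, one has $\tfrac{d}{dt}J_\mu(\eta_n(t,v))\le -\chi_n\le 0$ and the drop is at least $\varepsilon$ after time $T=\varepsilon$. Since $\max_{\gamma_n}J_\mu<\theta_\mu+\varepsilon/2$ for $n$ large, we get $\max_{\eta_n(T,\gamma_n)}J_\mu\le\theta_\mu-\varepsilon/2$. Now $\eta_n(T,\cdot)$ is a deformation of $X$ leaving $K$ fixed, so by the deformation-invariance property of $\Gamma$ we have $\eta_n(T,\gamma_n)\in\Gamma$, contradicting the definition of $\theta_\mu$. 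Applying this in the contrapositive produces the required $(v_n)\subset X$ with $J_\mu(v_n)\to\theta_\mu$, $\|\nabla_X J_\mu(v_n)\|\to 0$, and crucially $dist(v_n,\gamma_n)\to 0$.

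The main technical obstacle is the last item: most textbook deformation arguments only give a Palais–Smale sequence somewhere in $X$, and obtaining $dist(v_n,\gamma_n)\to 0$ requires that the pseudo-gradient and its cut-off both live in a $2\delta$-tube around $\gamma_n$ (so orbits cannot escape); the invariance hypothesis on $\Gamma$ must be strong enough to guarantee that the slightly deformed set $\eta_n(T,\gamma_n)$, which contains $K$ but may no longer be compact in the same way, still belongs to $\Gamma$. Once these compatibility issues between the local flow, the fixed set $K$, and the class $\Gamma$ are handled, the contradiction closes and the lemma follows.
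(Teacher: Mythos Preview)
The paper does not supply a proof of this lemma; it is quoted from \cite[Theorem~4.1]{G93} and invoked as a black box. Your sketch follows exactly the standard localized pseudo-gradient deformation argument that underlies Ghoussoub's theorem, and is correct in outline, so there is nothing in the paper to compare against. One small quantitative point worth tightening in Step~3: to guarantee that an orbit starting in $\gamma_n\subset\{dist(\cdot,\gamma_n)=0\}$ remains in the inner tube $\{dist(\cdot,\gamma_n)\le\delta\}$ (where $\chi_n\equiv1$) long enough to lose at least $\varepsilon$ of energy, you need the time--displacement bound $(2/\delta)\cdot T\le\delta$, i.e.\ $\varepsilon\le\delta^{2}/2$; this is the ``compatibility issue'' you flag at the end and is handled simply by shrinking $\varepsilon$ relative to $\delta$ at the outset.
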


\begin{lemma}
\label{T3.5} There exists a bounded Palais-Smale sequence $\left(
v_{n},\lambda _{n}\right) $ for $J_{\mu }$ on $\mathcal{M}$ at the level $%
\theta _{\mu }$, satisfying
\begin{equation}
J_{\mu }\left( v_{n}\right) \rightarrow \theta _{\mu },\ J_{\mu }^{\prime
}\left( v_{n}\right) |_{\mathcal{M}}\rightarrow 0,\ \text{as}\ n\rightarrow
+\infty ,\ \lim\limits_{n\rightarrow +\infty }\Vert v_{n}^{-}\Vert _{2}=0,
\label{e41}
\end{equation}%
and
\begin{equation}
\Vert v_{n}\Vert _{2}^{2}-\mu \Vert v_{n}\Vert _{2^{\ast }}^{2^{\ast }}+%
\frac{1}{2}\int_{\mathbb{R}^{N}}V(x)\left( Nv_{n}^{2}+2v_{n}\nabla
v_{n}\cdot x\right) dx\rightarrow 0,\ \text{as}\ n\rightarrow \infty .
\label{e42}
\end{equation}%
Moreover,  there exist $\bar{\lambda}_{\ast}\in \mathbb{R}, \bar{v}_{\ast }\in H^{1}(\mathbb{R}^{N})\setminus
\{0\}$ such that up to a subsequence, $\lambda_{n}\rightarrow \bar{\lambda}_{\ast }$ in $%
\mathbb{R}$ and $v_{n}\rightarrow \bar{v}_{\ast }$ in $%
H^{1}(\mathbb{R}^{N})$.
\end{lemma}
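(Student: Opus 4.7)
The plan is to apply Ghoussoub's min-max principle (Lemma \ref{T3.4}) to the mountain-pass class $\Gamma$ (restricted, if necessary, to a fixed-endpoint subfamily with the same minimax value $\theta_\mu$) with a compact set $K$ consisting of the path endpoints. Since $J_\mu(\bar v)=\alpha_\mu$ and $J_\mu(\gamma(1))<\alpha_\mu<\theta_\mu$, the hypothesis $\max_K J_\mu<\theta_\mu$ holds trivially, and the principle produces a minimizing sequence of paths $\gamma_n\in\Gamma$ together with a Palais--Smale sequence $(v_n)\subset\mathcal{M}$ at level $\theta_\mu$ satisfying $\mathrm{dist}(v_n,\gamma_n)\to 0$.

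To enforce the two additional properties on $(v_n)$, I would refine $\Gamma$ in two ways. First, since $J_\mu(|v|)=J_\mu(v)$ and the basepoint $\bar v$ is nonnegative, each path can be replaced by its absolute value without changing the maximum; together with $\mathrm{dist}(v_n,\gamma_n)\to 0$ this yields $\|v_n^-\|_2\to 0$. Second, to obtain the asymptotic Pohozaev identity (\ref{e42}), I would use the $L^2$-preserving dilation $v\mapsto v_s:=s^{N/2}v(sx)$ to enlarge the class to a two-parameter family and apply Ghoussoub's principle on $\mathbb{R}\times\mathcal{M}$ with the augmented functional $(s,v)\mapsto J_\mu(v_s)$; the vanishing of the $s$-derivative at the asymptotic critical point is precisely (\ref{e42}), in the spirit of Jeanjean's trick.

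Next I would establish boundedness of $(v_n,\lambda_n)$. Combining $J_\mu(v_n)\to\theta_\mu$ with (\ref{e42}) and using Lemma \ref{T2.1} to control the potential terms under the smallness assumption (\ref{e5}) yields a uniform bound on $\|\nabla v_n\|_2$. Testing the Lagrange equation $-\Delta v_n+V v_n-\mu|v_n|^{2^*-2}v_n=\lambda_n v_n+o(1)$ against $v_n$ then bounds $\lambda_n$, so up to a subsequence $\lambda_n\to\bar\lambda_*$.

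The concluding and most delicate step is upgrading weak to strong convergence via Lemma \ref{T2.3}. Alternative (ii) is ruled out by an energy gap: any weak limit $v_0$ with $\lambda_0<0$ satisfies $J_\mu(v_0)\ge\alpha_\mu$ by Lemma \ref{T2.6}, whence $\theta_\mu\ge\alpha_\mu+\tfrac{1}{N}\mathcal{S}^{N/2}\mu^{1-N/2}$, contradicting Lemma \ref{T3.3}. Alternative (iii), with $v_0\equiv 0$, is ruled out as follows: the vanishing-at-infinity decomposition of $V$ combined with $v_n\rightharpoonup 0$ in $H^1(\mathbb{R}^N)$ forces $\int_{\mathbb{R}^N}V v_n^2\,dx\to 0$ and $\int_{\mathbb{R}^N}V v_n\nabla v_n\cdot x\,dx\to 0$; then (\ref{e42}) together with $J_\mu(v_n)\to\theta_\mu$ give $\tfrac{1}{N}\|\nabla v_n\|_2^2\to\theta_\mu$ and $\|\nabla v_n\|_2^2-\mu\|v_n\|_{2^*}^{2^*}\to 0$, and the Sobolev inequality yields $\theta_\mu\ge\tfrac{1}{N}\mathcal{S}^{N/2}\mu^{1-N/2}$, again contradicting Lemma \ref{T3.3}. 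Therefore alternative (i) holds: $v_n\to\bar v_*$ strongly in $H^1(\mathbb{R}^N)$ with $\bar\lambda_*<0$, and $\bar v_*\not\equiv 0$ since $J_\mu(\bar v_*)=\theta_\mu>0=J_\mu(0)$. The main obstacle is designing the augmented minimax class so that the resulting Palais--Smale sequence simultaneously satisfies all three properties in (\ref{e41})--(\ref{e42}) and inherits boundedness from the potential smallness.
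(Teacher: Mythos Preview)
Your proposal is correct and follows essentially the same strategy as the paper: Ghoussoub's principle (Lemma \ref{T3.4}) combined with Jeanjean's dilation trick and the sign symmetry $J_\mu(|v|)=J_\mu(v)$ to produce the Palais--Smale sequence with properties (\ref{e41})--(\ref{e42}); boundedness from the energy/Pohozaev combination; and strong convergence via Lemma \ref{T2.3}, with alternatives (ii) and (iii) excluded by the strict upper bound of Lemma \ref{T3.3}. The paper's proof is terse and merely cites Lemma \ref{T3.4}, \cite[Lemma 5.6]{VY25}, and Lemmas \ref{T2.3}, \ref{T3.3}; you have in fact supplied more of the mechanism than the paper does.

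Two small points. First, in your boundedness step you invoke (\ref{e5}), but for $N\geq 6$ the linear combination of $J_\mu(v_n)\to\theta_\mu$ and (\ref{e42}) yields
\[
\frac{1}{N}\Vert\nabla v_n\Vert_2^2=\theta_\mu+\frac{N-4}{4}\int V v_n^2+\frac{N-2}{2N}\int V v_n\nabla v_n\cdot x+o(1),
\]
so the potential terms that must be absorbed involve $V^+$ and $W$; the relevant smallness is (\ref{e7}), not (\ref{e5}). Second, when you exclude alternative (ii) via Lemma \ref{T2.6}, note that Lemma \ref{T2.6} is stated under the hypothesis $J_\mu(v_0)<0$; you should split into the trivial case $J_\mu(v_0)\geq 0>\alpha_\mu$ and the case $J_\mu(v_0)<0$ where Lemma \ref{T2.6} applies (with $\Vert v_0\Vert_2\leq 1$ by weak lower semicontinuity). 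Both are easy fixes and do not affect the overall argument.
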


\begin{proof}
According to Lemma \ref{T3.4}, we obtain a Palais-Smale sequence $\left(
v_{n},\lambda _{n}\right) $ which satisfies (\ref{e41}) and (\ref{e42}).
Similar to \cite[Lemma 5.6]{VY25}, we can prove that this sequence is
bounded. Finally, it follows from Lemmas \ref{T2.3} and \ref{T3.3} that
there exist $ \bar{\lambda}_{\ast}\in \mathbb{R}, \bar{v}_{\ast }\in H^{1}(\mathbb{R}^{N})\setminus
\{0\}$ such that up to a subsequence, $\lambda_{n}\rightarrow \bar{\lambda}_{\ast }$ in $%
\mathbb{R}$ and $v_{n}\rightarrow \bar{v}_{\ast }$ in $%
H^{1}(\mathbb{R}^{N})$. The proof is complete.
\end{proof}

\textbf{We are ready to prove Theorem \ref{T1.4}.} It directly follows from
Lemma \ref{T3.5}.

\section*{Acknowledgments}

J. Sun was supported by the National Natural Science Foundation of China
(Grant No. 12371174) and Shandong Provincial Natural Science Foundation
(Grant No. ZR2020JQ01).

\end{document}